\documentclass[10pt, reqno]{amsart}
\usepackage{graphicx, amssymb, amsmath, amsthm, hyperref}
\numberwithin{equation}{section}

\allowdisplaybreaks

\let\Re=\undefined\DeclareMathOperator*{\Re}{Re}

\DeclareMathOperator*{\spaan}{span}

\newcommand{\R}{\mathbb{R}}
\newcommand{\C}{\mathbb{C}}

\newcommand{\M}{\mathcal{M}}

\newtheorem{theorem}{Theorem}[section]

\newtheorem*{theorem*}{Theorem}

\newtheorem{lemma}[theorem]{Lemma}

\newtheorem{corollary}[theorem]{Corollary}

\newtheorem{proposition}[theorem]{Proposition}

\theoremstyle{definition}
\newtheorem{definition}[theorem]{Definition}

\theoremstyle{remark}

\newcommand{\qtq}[1]{\quad\text{#1}\quad}
\let\HH=\H
\renewcommand{\H}{\mathcal{H}}

\begin{document}

\title[Determination of the nonlinearity]{Determination of Schr\"odinger nonlinearities \\ from the scattering map}

\author[R. Killip]{Rowan Killip}
\email{killip@math.ucla.edu}
\address{Department of Mathematics, University of California Los Angeles}

\author[J. Murphy]{Jason Murphy}
\email{jamu@uoregon.edu}
\address{Department of Mathematics, University of Oregon}

\author[M. Visan]{Monica Visan}
\email{visan@math.ucla.edu}
\address{Department of Mathematics, University of California Los Angeles} 

\maketitle

\begin{abstract} We prove that the small-data scattering map uniquely determines the nonlinearity for a wide class of gauge-invariant, intercritical nonlinear Schr\"odinger equations.  We use the Born approximation to reduce the analysis to a deconvolution problem involving the distribution function for linear Schr\"odinger solutions.  We then solve this deconvolution problem using the Beurling--Lax Theorem.
\end{abstract}

\section{Introduction}

We consider nonlinear Schr\"odinger equations (NLS) of the form
\begin{equation}\label{nls}
(i\partial_t + \Delta) u = F(t,x,u),
\end{equation} 
where $(t,x)\in\R\times\R^d$ with $d\geq 1$ and $u:\R\times\R^d\to\C$.  We treat a general class of nonlinearities for which the equation \eqref{nls} admits a `small-data scattering theory' in $H^1(\R^d)$.  By this, we mean that for any sufficiently small $u_-\in H^1$, there exists a unique, global-in-time solution $u$ to \eqref{nls} and a $u_+\in H^1$ such that
\[
\lim_{t\to\pm\infty}\|u(t)-e^{it\Delta}u_\pm\|_{H^1}=0,
\]
where $e^{it\Delta}$ denotes the linear Schr\"odinger propagator; see Theorem~\ref{T:scatter} for more details.  To derive small-data scattering in $H^1$ for \eqref{nls}, it suffices assume that $F(t,x,u)$ decays rapidly enough as $|u|\to0$ and has controlled growth as $|u|\to\infty$; see Definition~\ref{D} below for the specific class of nonlinearities we consider.  

The small-data scattering theory for \eqref{nls} allows us to define the \emph{scattering map} $S_F$, which sends the asymptotic state $u_-$ at $t=-\infty$ to the asymptotic state $u_+$ at $t=+\infty$.  The question we would like to answer in this paper is the following inverse problem: Does the scattering map $S_F$ uniquely determinez the nonlinearity?  Our main result (Theorem~\ref{T} below) answers this question in the affirmative for a general class of nonlinearities. 

The precise assumptions we make on the nonlinear term $F$ are as follows:

\begin{definition}\label{D} Let $F:\R\times\R^d\times\C\to\C$.  We call a continuous function $F$ \emph{admissible with parameters $(p_0,p_1)$} if $F(t,x,u)=\rho(t,x,|u|^2)u$ for some real-valued function $\rho:\R\times\R^d\times[0,\infty)\to \R$ satisfying the following:
\begin{align*}
\partial_x^\alpha \rho(t,x,0)\equiv 0 \qtq{for}|\alpha|\leq 1,
\end{align*}
and there exists $C>0$ such that
\begin{align}\label{ad B}
\sup_{(t,x)\in\R\times\R^d} |\partial_x^\alpha \partial_\lambda \rho (t,x,\lambda)| \leq C\sum_{p\in\{p_0,p_1\}} \lambda^{\frac{p}{2}-1} \qtq{for}|\alpha|\leq 1,
\end{align}
where $p_0=\tfrac{4}{d}$ and
\[
p_0<p_1 = \begin{cases} \text{arbitrarily large but finite,} & \text{ if } d\in\{1,2\}, \\ \tfrac{4}{d-2}, &  \text{ if }  d\geq 3.\end{cases}
\]
\end{definition}

For $d=1,2$, the classes of admissible nonlinearities are nested: If \eqref{ad B} holds for some pair $(p_0,p_1)$, then it also holds with $(p_0,p_2)$ when $p_2\geq p_1$.  

The hypotheses on the parameters $(p_0,p_1)$ ensure that  \eqref{nls} is `{intercritical}'.  Indeed, the standard power-type NLS 
\begin{equation}\label{standard-NLS}
(i\partial_t + \Delta)u = \pm|u|^p u
\end{equation}
is invariant under the rescaling $u(t,x)\mapsto\lambda^{\frac{2}{p}}u(\lambda^2 t,\lambda x)$.  The homogeneous $L^2$-based Sobolev space of initial data that is invariant under this rescaling is $\dot H^{s(p)}(\R^d)$, where $s(p):=\tfrac{d}{2}-\tfrac{2}{p}$.  A well-established argument (employing Strichartz estimates and contraction mapping) guarantees local well-posedness of \eqref{standard-NLS} in $H^s(\R^d)$ for $s\geq s(p)$. The special cases $s(p)=0$ and $s(p)=1$ correspond to $p=\tfrac{4}{d}$ and $p=\tfrac{4}{d-2}$ (with $d\geq 3$) and are known as the mass- and energy-critical problems, respectively.  When $s(p)\in[0,1]$ we call the equation \eqref{standard-NLS} `intercritical'. 

The definition of admissibility ensures that the nonlinearities we consider satisfy the bounds
\[
|F(t,x,u)|\lesssim |u|^{p_0+1} + |u|^{p_1+1} ,
\]
with $s(p_0)=0$ and $s(p_1)\leq 1$, so that the equation \eqref{nls} may also be described as intercritical.  

We have several reasons for restricting our attention to `intercritical' nonlinearities: First, as $s(p_0)=0$, the exponent $p_0=\tfrac{4}{d}$ is the lowest we can include and still obtain scattering in the standard Sobolev space framework (without introducing weights, for example). The restriction $s(p_1)\leq 1$ is a simple way to guarantee that we never need to differentiate the nonlinearity more than once.  This simplifies the technical aspects of the small-data scattering analysis, thus allowing us to focus on the main ideas involved in the recovery of the nonlinearity.

In Theorem~\ref{T:scatter} below, we prove a small-data scattering theory for nonlinear Schr\"odinger equations of the form \eqref{nls} with admissible nonlinearities.  Given admissible nonlinearities $F_j$ with parameters $(p_0,p_j)$, we show that we can define the small-data scattering maps $S_j$ on sufficiently small balls $B_j$ in $H^{s(p_j)}$, where  $s(p_j):=\tfrac{d}{2}-\tfrac{2}{p_j}$ (see Definition~\ref{D:S} below). Note that the intersection $B_1\cap B_2$ is a neighborhood of zero in $H^{s(\max\{p_1,p_2\})}$, so that there is a common domain on which we can compare the scattering behaviors.

Our main result is the following theorem:

\begin{theorem}[The scattering map determines the nonlinearity]\label{T} Let $F_1$ and $F_2$ be admissible nonlinearities in the sense of Definition~\ref{D}.  If the corresponding scattering maps, $S_j:B_j\to H^{s(p_j)}$, satisfy $S_1=S_2$ on $B_1\cap B_2$, then $F_1\equiv F_2$. 
\end{theorem}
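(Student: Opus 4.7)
The plan is to follow the three-stage outline in the abstract: derive the Born approximation of the scattering map, reduce the resulting identity to a half-line deconvolution problem, and solve that deconvolution by invoking the Beurling--Lax theorem.

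\emph{Step 1 (Born approximation).} By Theorem~\ref{T:scatter} and Duhamel's formula, for every $u_-\in B_1\cap B_2$ and $j=1,2$,
\[
S_j(u_-)-u_-=-i\int_\R e^{-is\Delta}F_j\bigl(s,x,u_j(s,x)\bigr)\,ds,
\]
where $u_j$ denotes the solution that scatters to $u_-$ at $t=-\infty$. The hypothesis $S_1\equiv S_2$ yields $\int_\R e^{-is\Delta}[F_1(s,x,u_1)-F_2(s,x,u_2)]\,ds=0$. I would then take $u_-=\eps v$ for $v$ Schwartz and $\eps>0$ small. Small-data scattering gives $u_j^\eps-\eps e^{is\Delta}v=O(\eps^{p_0+1})$ in the relevant Strichartz norm, so replacing $u_j^\eps$ by $\eps e^{is\Delta}v$ inside $F_j$ produces only higher-order errors. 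Writing $\sigma:=\rho_1-\rho_2$, $\phi:=e^{is\Delta}v$, and $\psi:=e^{is\Delta}w$ for a Schwartz test function $w$, pairing against $\psi$ gives the leading-order identity
\[
\int_\R\!\int_{\R^d}\sigma\bigl(s,x,\eps^2|\phi(s,x)|^2\bigr)\,\phi(s,x)\,\overline{\psi(s,x)}\,dx\,ds=o(\eps^{p_0})\quad\text{as }\eps\to 0^+.
\]

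\emph{Step 2 (localisation and reduction to a convolution).} To decouple the $(s,x)$-dependence of $\sigma$ from its $\lambda$-dependence, I would exploit the time-translation, space-translation, Galilean, and parabolic-scaling symmetries of $e^{is\Delta}$: applying these to $v$ and $w$ lets the weak identity probe $\sigma(s_0,x_0,\cdot)$ near any chosen spacetime point. Using coherent-state wave packets concentrated at $(s_0,x_0)$ and passing to a suitable limit yields, after renormalisation, an identity of the form
\[
\int_0^\infty \sigma(s_0,x_0,\eps^2\lambda)\,d\mu(\lambda)=0\qquad\text{for all small }\eps>0,
\]
where the measure $\mu$ on $[0,\infty)$ is determined by the spacetime distribution function of $|\phi|^2$. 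In logarithmic coordinates $u=\log\lambda$ and $a=-\log(\eps^2)$ this is a half-line convolution identity $(\tilde\sigma\ast K)(a)=0$ for $a\geq a_0$, with $\tilde\sigma(u):=\sigma(s_0,x_0,e^u)$ and $K$ the push-forward of $\mu$ to the logarithmic scale.

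\emph{Step 3 (Beurling--Lax deconvolution).} Via Paley--Wiener, the half-line convolution identity places the Fourier (equivalently, Mellin) transform of $\tilde\sigma$ in a shift-invariant subspace of a Hardy space of a half-plane, determined by $\widehat K$. The Beurling--Lax theorem characterises such subspaces via the inner factor of $\widehat K$; provided that $\widehat K$ has no nontrivial inner factor --- a property I would verify using the explicit form of the Schr\"odinger propagator and the non-degeneracy of the distribution function of $|\phi|^2$ --- this forces $\tilde\sigma\equiv 0$. Since the identity holds at every $(s_0,x_0)$, and since varying $v$ at fixed $H^{s(p_1)}$ size lets $\eps^2|\phi|^2$ reach every value in $[0,\infty)$, we conclude $\sigma\equiv 0$ pointwise, i.e.\ $\rho_1\equiv\rho_2$ and thus $F_1\equiv F_2$.

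The main obstacle I anticipate is Step~3: establishing the analytic non-degeneracy of the kernel $K$ (equivalently, of the spacetime distribution function of $|e^{is\Delta}v|^2$) that excludes nontrivial inner factors and thereby makes the Beurling--Lax reduction genuinely invert the convolution. A secondary technical point is ensuring that the coherent-state concentration in Step~2 can be carried out while remaining compatible with the higher-order Strichartz error control inherited from Step~1.
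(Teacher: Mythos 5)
Your outline reproduces the paper's strategy step for step: Born approximation, localization by concentrating wave packets, rewriting the resulting identity via the distribution function of $|e^{it\Delta}\psi|^2$, and deconvolution via Beurling--Lax. One structural point in Step 1: you pair $(S_j-I)u_-$ against an arbitrary $\psi=e^{is\Delta}w$, but the reduction in Step 2 to an integral against the distribution function of $|\phi|^2$ only goes through if you take $w=v$, i.e., pair against $u_-$ itself. Gauge invariance $F_j(t,x,u)=\rho_j(t,x,|u|^2)u$ then makes the leading term $\iint G_j(t,x,|e^{it\Delta}u_-|^2)\,dx\,dt$ with $G_j(t,x,\lambda)=\rho_j(t,x,\lambda)\lambda$, which is a functional of $|e^{it\Delta}u_-|^2$ alone and hence of its distribution function; for general $w$ the integrand $\sigma(\cdot)\phi\overline\psi$ has no such structure. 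This is easily repaired. Your secondary worry about reconciling concentration with the Born error is resolved by scaling: for data concentrated at scale $\sigma$ the main term is of size $\sigma^{d+2}$ while the Born error is $\mathcal O(\sigma^{d+4})$.

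The genuine gap is the step you explicitly defer: proving that the Laplace transform of the (damped) kernel $k\mapsto\mu(e^{-k})$ is outer. You propose to verify this from ``the non-degeneracy of the distribution function,'' but non-degeneracy is nowhere near sufficient --- any zero of the transform in the open half-plane produces a nontrivial Blaschke inner factor, and for a generic Schwartz datum $v$ there is no computable handle on the transform at all. This is the heart of the theorem, and the paper's resolution is to specialize to Gaussian data $\psi(x)=e^{-|x|^2/4}$, for which $|e^{it\Delta}\psi|^2=(1+t^2)^{-d/2}\exp\{-|x|^2/(2(1+t^2))\}$ is explicit. One then computes
\[
M(z)=\int_0^\infty e^{-kz}\mu(e^{-k})\,dk
=2^{\frac d2}\pi^{\frac{d+1}{2}}z^{-\frac d2-1}\,\frac{\Gamma(\tfrac d2(z-1)-\tfrac12)}{\Gamma(\tfrac d2(z-1))},
\]
establishes two-sided Gamma-function bounds $|M(z)|\approx(1+|z|^2)^{-(d+3)/4}$ on a half-plane $\Re z\geq 1+\tfrac{3}{2d}$, and deduces outerness from the elementary criterion that $V\in\H^2(\mathbb H)$ with $(1+z)^{-n}/V\in\H^2(\mathbb H)$ for some $n$ must be outer. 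Without this (or an equivalent) computation, your Step 3 is an assumption rather than a proof, and the argument is incomplete.
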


The problem of recovering unknown parameters (including external potentials, as well as nonlinearities) from the scattering data is a classical problem that has received significant interest in the setting of nonlinear dispersive PDE.  In what follows, we will review the literature that is most closely related to our main result, specifically focusing on `time-dependent' scattering problems. 

Many previous works on recovering the nonlinearity from the scattering map rely on fairly strong assumptions on the nonlinearity.  This includes assumptions such as analyticity, along with structural assumptions in which one assumes the nonlinearity has a certain form (e.g. $F(x,u)=\alpha(x)|u|^p u$ or $F(x,u)=(|x|^{-\gamma}\ast|u|^2)u$) and seeks to recover unknown parameters (i.e. $p$ and $\alpha$ in the first example, or $\gamma$ in the second).  We refer the reader to \cite{CarlesGallagher, PauStr} for treatments of the analytic case, \cite{ChenMurphy1, ChenMurphy2,  EW, MorStr, Murphy, Strauss, Watanabe, Weder0, Weder1, Weder6, Weder3, Weder4, Weder5} for treatments of power-type and related cases, and \cite{Sasaki2, Sasaki, Sasaki3, SasakiWatanabe, Watanabe0} for treatments of Hartree-type cases.  See also  \cite{SBS2, HMG, LeeYu} for related work.

We were inspired to consider the problem of recovering unknown nonlinearities from the scattering map by the work \cite{SBUW}, which considered this problem in the setting of quintic-type nonlinear wave equations and studied the problem using microlocal analysis techniques.  In particular, in our previous work \cite{KMV} we proved a result similar to Theorem~\ref{T}, introducing a new technique that reduces the analysis to a deconvolution problem. In \cite{KMV}, we only treated the two-dimensional NLS and did not consider nonlinearities that may also depend on the time and space variables.  The techniques introduced in \cite{KMV} were subsequently extended to the setting of the nonlinear wave equation in \cite{HKV}, which strengthened the original results of \cite{SBUW} in several directions.

The role of this paper is to further develop the techniques introduced in \cite{KMV}, thereby expanding their applicability.   In particular, in this work we remove the restriction on spatial dimension and broaden the class of nonlinearities under consideration, by allowing dependence on the space and time variables.

For the remainder of the introduction, we outline the strategy of the proof of Theorem~\ref{T}.  Using the Duhamel formula, one finds that the scattering map satisfies the following implicit formula:
\begin{equation}\label{implicit-formula}
S_F(u_-)=u_--i\int_\R e^{-it\Delta}F(t,x,u(t))\,dt,
\end{equation}
where $u$ is the solution to \eqref{nls} that scatters to $u_-$ as $t\to-\infty$.  Replacing the full solution $u(t)$ with $e^{it\Delta}u_-$ on the right-hand side of \eqref{implicit-formula} constitutes the \emph{Born approximation},
\[
 S_F^B : \varphi\mapsto \varphi-i\int_\R e^{-it\Delta}F(t,x,e^{it\Delta}\varphi)\,dt,
\]
to the scattering map.  As we will see, the Born approximation accurately describes the small-data regime.  In this way, we will show that knowledge of the scattering map completely determines integrals of the form
\[
i \bigl\langle \varphi, [ I - S_F^B ](\varphi)\bigr\rangle =  
\iint G(t,x,|e^{it\Delta}\varphi|^2)\,dt\,dx,
\]
where $G(t,x,|z|^2)=\bar z F(t,x,z)$ and $\varphi\in H^1$; see Lemma~\ref{L:Born}.

Next, by choosing linear solutions that concentrate around a single point $(t_0,x_0)$ in space-time, we will see that knowledge of the scattering map allows one to effectively evaluate integrals of the form
\begin{equation}\label{PE}
\iint G(t_0,x_0,|e^{it\Delta}\varphi|^2)\,dt\,dx
\end{equation}
for fixed $(t_0,x_0)\in\R\times\R^d$; see Lemma~\ref{L:localize}.

Using the Fundamental Theorem of Calculus and Fubini's Theorem, integrals of the form \eqref{PE} may be rewritten in terms of the distribution function $\mu$ of the function $(t,x)\mapsto |e^{it\Delta}\varphi|^2(x)$.  Varying the amplitude of the data, one can recognize the resulting integral as a type of convolution of the nonlinearity with a fixed weight.  Thus, the original problem reduces to one of deconvolution.  This reduction is carried out in detail in Section~\ref{S:Reduction}.

The final ingredient in the proof entails specializing to Gaussian data (for which the corresponding linear solutions remain Gaussian for all time).  In this case, we can derive sufficient information about the convolution weight to successfully resolve the deconvolution problem.  Precisely, this requires that we prove that the Laplace transform of the function $k\mapsto \mu(e^{-k})$ is an outer function on a suitable half plane.  With this input, we can use the Beurling--Lax Theorem of analytic function theory to solve the deconvolution problem.  This is accomplished in Section~\ref{S:Deconvolution}.

The rest of this paper is organized as follows: Section~\ref{S:prelim} collects the technical preliminaries needed in the remainder of the paper. This includes Section~\ref{S:BL}, which provides an introduction to Hardy spaces and the Beurling--Lax Theorem. Section~\ref{S:scatter} contains the proof of the small-data scattering theory for \eqref{nls} with admissible nonlinearities (see Theorem~\ref{T:scatter}). Section~\ref{S:Reduction} reduces the proof of the main result (Theorem~\ref{T}) to a deconvolution problem (see Proposition~\ref{P:agree} and Corollary~\ref{C:conv0}). Finally, Section~\ref{S:Deconvolution} resolves this deconvolution problem and includes the proof of the main result, Theorem~\ref{T}.

\subsection*{Acknowledgements} R.K. was supported by NSF grant DMS-2154022;  J.M. was supported by NSF grant DMS-2350225; M.V. was supported by NSF grant DMS-2054194.

\section{Preliminaries}\label{S:prelim}

We write $A\lesssim B$ or $A=\mathcal{O}(B)$ to denote $A\leq CB$ for some absolute constant $C>0$.  We will use $A\approx B$ to denote $A\lesssim B\lesssim A$.  We write $f(\sigma)= o(\sigma^C)$ when $\sigma^{-C}f(\sigma)\to 0$ as $\sigma \to 0$. We indicate dependencies on additional parameters via subscripts.  

We write $L_t^q L_x^r(I\times\R^d)$ for the mixed Lebesgue space on a space-time slab $I\times\R^d$, equipped with the norm
$$
\|u\|_{L_t^q L_x^r(I\times\R^d)} = \bigl\| \|u(t)\|_{L_x^r(\R^d)} \bigr\|_{L_t^q(I)}.
$$
We use $H^{s,r}$ to denote the inhomogeneous Sobolev space with norm
\[
\|u\|_{H^{s,r}} = \|u\|_{L^r} + \| |\nabla|^s u\|_{L^r},
\]
and we denote the $L^2$ inner product by $\langle\cdot,\cdot\rangle$. 

We will make use of the standard Strichartz estimates for the linear Schr\"odinger equation.  We call a pair $(q,r)\in[2,\infty]\times[2,\infty]$ \emph{Schr\"odinger admissible} in $d$ dimensions if $\tfrac{2}{q}+\tfrac{d}{r}=\tfrac{d}{2}$ and $(d,q,r)\neq(2,2,\infty)$.  Correspondingly, we call a space $L^q_tL_x^r$ Schr\"odinger admissible if the pair $(q,r)$ is Schr\"odinger admissible. 

\begin{lemma}[Strichartz estimates, \cite{GinibreVelo, KeelTao, Strichartz}] For any Schr\"odinger admissible pair $(q,r)$ and any $\varphi\in L^2(\R^d)$, 
\[
\|e^{it\Delta}\varphi\|_{L_t^q L_x^r(\R\times\R^d)}\lesssim \|\varphi\|_{L^2}. 
\]
Given an interval $I$ containing $0$, Schr\"odinger admissible pairs $(q,r),(\tilde q,\tilde r)$, and $F\in L_t^{\tilde q'}L_x^{\tilde r'}(I\times\R^d)$, we have
\[
\biggl\| \int_0^t e^{i(t-s)\Delta}F(s)\,ds \biggr\|_{L_t^q L_x^r(I\times\R^d)}\lesssim \|F\|_{L_t^{\tilde q'}L_x^{\tilde r'}(I\times\R^d)}. 
\]
\end{lemma}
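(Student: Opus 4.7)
The plan is to prove both the homogeneous and inhomogeneous Strichartz bounds starting from the dispersive decay estimate for the free Schr\"odinger propagator, following the classical $TT^*$ approach of Ginibre--Velo with the Keel--Tao refinement at the endpoint. The starting point is the explicit kernel $e^{it\Delta}\varphi(x)=(4\pi it)^{-d/2}\int_{\R^d} e^{i|x-y|^2/(4t)}\varphi(y)\,dy$, which gives the dispersive estimate $\|e^{it\Delta}\varphi\|_{L^\infty_x}\lesssim |t|^{-d/2}\|\varphi\|_{L^1_x}$ for $t\neq 0$. Interpolating this with the $L^2$ unitarity $\|e^{it\Delta}\varphi\|_{L^2_x}=\|\varphi\|_{L^2_x}$ via Riesz--Thorin yields $\|e^{it\Delta}\varphi\|_{L^r_x}\lesssim |t|^{-d(\frac12-\frac1r)}\|\varphi\|_{L^{r'}_x}$ for every $r\in[2,\infty]$.

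For the homogeneous estimate I would apply $TT^*$: setting $T\varphi=e^{it\Delta}\varphi$, boundedness of $T:L^2_x\to L^q_tL^r_x$ is equivalent by duality to boundedness of $TT^*F(t)=\int_\R e^{i(t-s)\Delta}F(s)\,ds$ from $L^{q'}_tL^{r'}_x$ into $L^q_tL^r_x$. Combining the dispersive estimate with Minkowski's inequality in space reduces this to proving that convolution in time with $|t|^{-d(\frac12-\frac1r)}$ maps $L^{q'}_t$ to $L^q_t$, which is exactly the Hardy--Littlewood--Sobolev inequality under the admissibility relation $\frac{2}{q}+\frac{d}{r}=\frac{d}{2}$. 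For the retarded inhomogeneous estimate with possibly distinct admissible pairs $(q,r)\neq(\tilde q,\tilde r)$, the Christ--Kiselev lemma upgrades the non-retarded bound on $\int_\R$ to the retarded integral $\int_0^t$, using that Schr\"odinger admissibility forces $q>\tilde q'$ whenever the pairs disagree away from the endpoint.

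The main obstacle is the double endpoint $(q,r)=(\tilde q,\tilde r)=(2,\tfrac{2d}{d-2})$ in dimensions $d\geq 3$, where the Hardy--Littlewood--Sobolev step degenerates to the borderline singular kernel $|t|^{-1}$ on $\R$ and where Christ--Kiselev fails. Here I would deploy the Keel--Tao bilinear interpolation argument: decompose $\langle TT^*F,G\rangle$ dyadically according to $|t-s|\sim 2^j$, estimate each dyadic piece by two different interpolations between the dispersive bound and the trivial $L^2$ bound to obtain two logarithmically divergent series that are exchanged against each other via Bourgain's real-interpolation trick, and sum to recover the endpoint. All other pairs follow by interpolation with the non-endpoint cases already obtained, so this bilinear endpoint argument is the only genuinely delicate step; the rest of the proof is soft consequences of the explicit kernel and the $L^2$ unitarity.
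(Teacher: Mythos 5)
The paper does not prove this lemma; it is quoted directly from the cited literature \cite{GinibreVelo, KeelTao, Strichartz}. Your outline --- dispersive estimate plus $L^2$ unitarity, the $TT^*$ reduction to Hardy--Littlewood--Sobolev in time for non-endpoint pairs, Christ--Kiselev for the retarded estimate with mixed admissible pairs, and the Keel--Tao bilinear dyadic interpolation at the endpoint $(2,\tfrac{2d}{d-2})$, $d\geq 3$ --- is a correct and faithful summary of exactly the standard proof contained in those references, so there is nothing to compare beyond noting the agreement.
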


\subsection{Choice of function spaces} 
We will employ the spaces $Y,Y'$ defined by 
\begin{equation}\label{spaces}
Y = L_{t,x}^{\frac{2(d+2)}{d}}\quad\text{and}\quad  Y' = L_{t,x}^{\frac{2(d+2)}{d+4}}. 
\end{equation}

We write $s(L_t^q L_x^r) = \tfrac{d}{2}-[\tfrac{2}{q}+\tfrac{d}{r}]$ for the Sobolev regularity associated to the Lebesgue space $L_t^q L_x^r$ under the Schr\"odinger scaling and $s(p)=\tfrac{d}{2}-\tfrac{2}{p}$ for the critical regularity associated to the power-type nonlinearity $|u|^p u$.  Note that $Y$ is Schr\"odinger admissible and so $s(Y)=0$.  

In what follows, we will choose $p\in\{p_0,p_1\}$, where $(p_0,p_1)$ are the parameters of an admissible nonlinearity (see Definition~\ref{D}). We further define the spaces
\begin{equation}\label{spaces'}
X_p = L_{t,x}^{\frac{p(d+2)}{2}} \quad\text{and}\quad \bar X_p = L_t^{\frac{p(d+2)}{2}} L_x^{\frac{2dp(d+2)}{dp(d+2)-8}}.
\end{equation}
Note that $\bar X_p$ is  Schr\"odinger admissible and so $s(\bar X_p)=0$. By Sobolev embedding,
\begin{align}\label{embedding}
\| u\|_{X_p} \lesssim \| |\nabla|^{s(p)} u\|_{\bar X_p}. 
\end{align}
Moreover, we have the following H\"older estimate
\begin{align*}
\| |u|^p u\|_{Y'} \lesssim \|u\|_{X_p}^p \|u\|_Y. 
\end{align*}

In dimensions $d\in\{1,2\}$, all power-type nonlinearities are energy-subcritical; in particular, $s(p_1)<1$. Correspondingly, we will need the following fractional chain rule estimate to establish the small-data scattering theory.  If $F(t,x,u)$ were independent of $x$, then the classical results described in \cite{CW, Taylor} would suffice.

\begin{proposition}[Fractional chain rule]\label{P:FCR} Let $F:\R^d\times\C\to\C$. Suppose that
\[
\partial_x^\alpha F(x,0) = 0 \qtq{for all}x\in\R^d \qtq{and} |\alpha| \leq 1
\]
and that there exists $L:\C\to[0,\infty)$ such that
\[
\sup_{x\in\R^d} |\partial_x^\alpha F(x,u)-\partial_x^\alpha F(x,v)| \leq [L(u)+L(v)]|u-v|
\]
for all $u,v\in\C$ and $|\alpha|\leq 1$. 

Then for any $s\in(0,1]$, $r,r_1\in(1,\infty)$, and $r_2\in(1,\infty]$ satisfying $\tfrac{1}{r}=\tfrac{1}{r_1}+\tfrac{1}{r_2}$, we have
\[
\|F(x,u)\|_{H^{s,r}} \lesssim \|L(u)\|_{L^{r_2}} \| u\|_{H^{s,r_1}}
\]
for any $u:\R^d\to\C$. 
\end{proposition}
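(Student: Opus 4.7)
The plan is to split the $H^{s,r}$ norm into its $L^r$ and $\||\nabla|^s(\cdot)\|_{L^r}$ pieces, handling the cases $s=1$ and $s\in(0,1)$ separately. For the $L^r$ piece, the Lipschitz hypothesis with $|\alpha|=0$ and $v=0$ gives $|F(x,u(x))| \leq [L(u(x))+L(0)]|u(x)|$, and H\"older yields the claimed bound. (We may absorb the constant $L(0)$ by replacing $L$ with $L+L(0)$, which only increases the right-hand side.)

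When $s=1$, a direct chain rule gives $\nabla[F(\cdot,u)] = (\nabla_x F)(x,u) + (\partial_u F)(x,u)\nabla u + (\partial_{\bar u} F)(x,u)\nabla\bar u$. The first term is bounded pointwise by $L(u)|u|$ using the Lipschitz hypothesis for $|\alpha|=1$ with $v=0$ together with $\nabla_x F(x,0)=0$; the remaining two terms are bounded pointwise by $L(u)|\nabla u|$ using the Lipschitz hypothesis in the $u$-variable (passing from finite differences to derivatives). H\"older then closes each contribution.

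When $s\in(0,1)$, I would invoke the Strichartz/Triebel--Lizorkin square-function characterization
\[
\||\nabla|^s f\|_{L^r} \approx \left\|\left(\int_{\R^d}\frac{|f(\cdot+h)-f(\cdot)|^2}{|h|^{d+2s}}\,dh\right)^{1/2}\right\|_{L^r},
\]
applied to $f(x) = F(x, u(x))$, and decompose the increment as $f(x+h)-f(x) = I(x,h) + II(x,h)$, where $I = F(x+h,u(x+h))-F(x+h,u(x))$ and $II = F(x+h,u(x))-F(x,u(x))$. For $II$, the Fundamental Theorem of Calculus combined with the Lipschitz bound on $\nabla_x F$ and $\nabla_x F(x,0)=0$ gives $|II(x,h)| \lesssim \min\{|h|,1\}\,L(u(x))|u(x)|$; integrating against $|h|^{-d-2s}\,dh$ and taking $L^r$ in $x$ yields the bound $\|L(u)\|_{L^{r_2}}\|u\|_{L^{r_1}}$. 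For $I$, the Lipschitz hypothesis in $u$ gives $|I(x,h)| \leq [L(u(x+h))+L(u(x))]|u(x+h)-u(x)|$; the $L(u(x))$ summand factors out of the $h$-integral and is controlled by $\|L(u)\|_{L^{r_2}}\||\nabla|^s u\|_{L^{r_1}}$ via H\"older and the square-function characterization.

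The main obstacle is the $L(u(x+h))$ summand of $I$, since this weight depends on the translation parameter $h$. When $r\geq 2$, Minkowski's inequality exchanges the $L^{r/2}(dx)$ and $L^1(dh)$ integrals; H\"older in $x$ with $1/(r/2)=1/(r_1/2)+1/(r_2/2)$ produces $\|L(u)\|_{L^{r_2}}^2\|u(\cdot+h)-u(\cdot)\|_{L^{r_1}}^2$, and a further Minkowski step bounds the resulting $L^2(|h|^{-d-2s}\,dh)$ norm by $\||\nabla|^s u\|_{L^{r_1}}^2$. For $r<2$, this Minkowski step is unavailable, and one must instead argue frequency-by-frequency via a Littlewood--Paley decomposition of $u$ in the spirit of the Christ--Weinstein proof; this is the most delicate step of the argument, with the whole $x$-dependence of $F$ hinging on the fact that $\nabla_x F(x,0)=0$ so that the ``$x$-difference'' term $II$ is absorbed by the easy $L^r$ portion of the estimate.
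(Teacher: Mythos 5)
Your core idea coincides with the paper's: split the increment of $x\mapsto F(x,u(x))$ into an ``$x$-difference'' $F(x+h,u(x))-F(x,u(x))$ and a ``$u$-difference'' $F(x+h,u(x+h))-F(x+h,u(x))$, dispose of the former via the Fundamental Theorem of Calculus together with $\nabla_x F(x,0)=0$ (gaining the factor $\min\{|h|,1\}$ that makes the $h$-integral converge precisely because $0<s<1$), and reduce the latter to the classical fractional chain rule. What differs is the square function used to access $\||\nabla|^s\cdot\|_{L^r}$: you take the Strichartz--Stein difference-quotient functional $\mathcal D_s$, whereas the paper uses the Littlewood--Paley square function and expands $P_N F(x,u)$ against the kernel $N^d\check\psi(Ny)$ with $\smallint\check\psi=0$; there the gain is $N^{-1}$, and the convergence of $\sum_{N\geq1}N^{2s-2}$ plays the role of your $\int\min\{|h|,1\}^2|h|^{-d-2s}\,dh<\infty$.

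The gap is the exponent range. Your Minkowski treatment of the translated weight $L(u(x+h))$ requires $r\geq 2$, as you say --- but the exponent the paper actually needs in Section~3 is $r=\tfrac{2(d+2)}{d+4}<2$ (estimating $\||\nabla|^{s(p_1)}F\|_{Y'}$). So the case you defer to ``Littlewood--Paley in the spirit of Christ--Weinstein'' is not a residual case: it is the case the proposition exists to cover, and filling it in is exactly the paper's proof. There is also a second, unacknowledged obstruction to the difference-quotient route: the inequality $\|\mathcal D_s f\|_{L^p}\lesssim\||\nabla|^s f\|_{L^p}$, which you invoke for $u$ at exponent $r_1$, fails for $p\leq\tfrac{2d}{d+2s}$ (already for a smooth bump one has $\mathcal D_s f(x)\gtrsim|x|^{-(d+2s)/2}$ at infinity), so the characterization cannot simply be pushed to all of $(1,\infty)$ even if the Minkowski step were repaired. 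The Littlewood--Paley framework resolves both issues simultaneously: the Schwartz kernel $\check\psi$ lets one dominate translated weights by the Hardy--Littlewood maximal function, and the Fefferman--Stein vector-valued maximal inequality then closes the estimate for every $1<r<\infty$. Two smaller points: replacing $L$ by $L+L(0)$ does not ``only increase the right-hand side'' --- a nonzero constant is not in $L^{r_2}$ for $r_2<\infty$ --- so one should instead observe that the intended applications have $L(0)=0$ (the paper elides the same point); and your $s=1$ and $L^r$ reductions match the paper's, which handles them in one line by H\"older and the ordinary chain rule.
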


\begin{proof} The case $s=1$ follows from H\"older's inequality and the standard chain rule; thus, it suffices to consider $s\in(0,1)$. 

By the Littlewood--Paley square function estimate, we may bound
\begin{equation}\label{LPsf}
\| F(x,u)\|_{H^{s,r}} \lesssim  \|F(x,u)\|_{L^r}+ \biggl\| \biggl(\sum_{N\geq 1} N^{2s} |P_NF(x,u)|^2\biggr)^{\frac12}\biggr\|_{L^r},
\end{equation}
where the sum is over dyadic numbers and $P_N$ is the standard Littlewood--Paley projection onto frequencies $|\xi|\approx N$. 

By H\"older's inequality and the assumptions on $F$, we observe that
\[
\|F(x,u)\|_{L^r} \lesssim \|L(u)\|_{L^{r_2}} \|u\|_{L^{r_1}},
\]
which is acceptable. 

Writing $\check\psi$ for the convolution kernel of $P_1$, we use the fact that $\int \check\psi = 0$ to obtain
\[
 P_N F(x,u(x)) = \int N^d \check\psi(Ny)[F(x-y,u(x-y))-F(x,u(x))]\,dy. 
\]
We now write
\begin{align}
F(x-y,u(x-y))&-F(x,u(x)) \nonumber \\
& = F(x-y,u(x-y))-F(x,u(x-y)) \label{1123-1} \\
& \quad + F(x,u(x-y)) - F(x,u(x)). \label{1123-2}
\end{align}

We first observe that by the Fundamental Theorem of Calculus and the properties of $F$, we have
\begin{align*}
|\eqref{1123-1}|&  = \biggl| \int_0^1 [y \cdot \nabla_x F] (x-\theta y,u(x-y)) \,d\theta \biggr|  \lesssim |y|\,L(u(x-y))|u(x-y)|. 
\end{align*}
In this way, we obtain
\begin{align*}
\int N^d |\check\psi(Ny)|\, | \eqref{1123-1}|\,dy 
& \lesssim N^{-1} \int N^d |Ny| |\check \psi(Ny)| \, L(u(x-y)) \, |u(x-y)|\,dy \\
& \lesssim N^{-1}\M[L(u)u](x), 
\end{align*}
where $\M$ is the Hardy--Littlewood maximal function. Thus, using the maximal function estimate and H\"older's inequality, the contribution of \eqref{1123-1} to the sum in \eqref{LPsf} can be bounded by
\begin{align*}
\biggl\| \biggl(\sum_{N\geq 1} N^{2s-2}\biggr)^{\frac12}\M[L(u)u]\biggr\|_{L^r} \lesssim \|L(u)\|_{L^{r_2}}\|u\|_{L^{r_1}},
\end{align*}
which is acceptable. 

Next, we use the properties of $F$ to bound
\[
|\eqref{1123-2}| \lesssim [L(u(x-y))+L(u(x))]|u(x-y)-u(x)|. 
\]
The contribution of this term can now be estimated exactly as in the proof of the standard fractional chain rule; see for example \cite[Proposition~5.1]{Taylor}.  In particular, the contribution of \eqref{1123-2} to the sum in \eqref{LPsf} is bounded by
\[
\| L(u)\|_{L^{r_2}} \||\nabla|^s u\|_{L^{r_1}},
\]
which is acceptable.\end{proof}

\subsection{The Beurling--Lax Theorem}\label{S:BL} In this subsection, we are interested in pairs of functions $v,\varphi \in  L^2([0,\infty))$ that satisfy
\begin{equation}\label{BL question}
\int_0^\infty \overline{\varphi(k+\ell)} v(k) \,dk = 0 \qtq{for all} \ell\in[0,\infty).
\end{equation}
The specific question that we discuss is this: For which functions $v$ does \eqref{BL question} imply that $\varphi=0$?  As we will see, the Beurling--Lax Theorem provides a complete solution to this problem in terms of the Laplace transform of $v$.  For further details on what follows, we recommend the textbooks \cite{Garnett,Hoffman}. 

For $v,\varphi \in L^2([0,\infty))$, 
\begin{equation}\label{v to V}
V(z) := \int_0^\infty e^{-kz} v(k) \,dk \qtq{and} \Phi(z):=\int_0^\infty e^{-kz} \varphi(k) \,dk
\end{equation}
define analytic functions in the right half-plane $\mathbb{H}:=\{z\in\C:\Re z>0\}$.  Moreover,
\begin{equation}\label{Hardy cond}
 \sup_{x>0} \int_\R \bigl| V(x+iy)\bigr|^2\,dy < \infty .
\end{equation}
The same holds for $\Phi(z)$, of course, but let us focus on $V(z)$ for the moment.

The space of functions $V(z)$ analytic in $\mathbb{H}$ and satisfying \eqref{Hardy cond} is known as the Hardy space $\H^2(\mathbb H)$.  By the Paley--Wiener Theorem, this space is precisely the image of $L^2([0,\infty))$ under the mapping \eqref{v to V}.

The standard tools of harmonic analysis guarantee that the limit
\begin{equation}\label{V BVs}
V(iy) := \lim_{x\downarrow 0} V(x+iy)
\end{equation}
exists in both $L^2(\R)$ and a.e. senses.  Moreover, one may recover $V(z)$ from its boundary values via the Poisson integral formula.  Boundary values also provide the simplest definition of the inner product on the Hilbert space $\H^2(\mathbb H)$:
\begin{equation}\label{2.11}
\langle \Phi, V \rangle_{\H^2} := \int_{-\infty}^\infty \overline{\Phi(iy)} V(iy) \, dy = 2\pi \int_0^\infty \overline{\varphi(k)} v(k) \, dk. 
\end{equation}
The last equality here follows from the Plancherel identity.

Except in the case $V\equiv 0$, the boundary values of a function $V\in \H^2(\mathbb H)$ cannot vanish on a set of positive measure; indeed, Szeg\HH{o} proved that
\begin{equation}\label{BVSzego}
\int \bigl|\log |V(iy)| \bigr| \frac{dy}{1+y^2} < \infty.
\end{equation}
In view of this, if $V\in \H^2$ and $V\not\equiv0$, we may define an analytic function on $\mathbb H$ by
\begin{equation}\label{OV defn}
 O_V(z) := \exp\biggl\{ \int_\R \log |V(it)| \Bigl[\frac{1}{z-it} - \frac{it}{1+t^2} \Bigr] \frac{dt}{\pi} \Bigr\} .
\end{equation}
This construction ensures that $\log|O_V(z)|$ is the Poisson integral of the boundary values $\log|V(iy)|$.  In particular, $\log|O_V(z)|$ is harmonic. In general, $\log|V(z)|$ is only subharmonic; for example, $V$ may have zeros in the half-plane $\mathbb H$.  This thinking leads us naturally to two important discoveries of Riesz: For all $0\not\equiv V\in \H^2(\mathbb H)$,
\begin{equation}\label{Riesz1}
 O_V(z) \in \H^2(\mathbb H) \qtq{and} |V(z)|  \leq |O_V(z)|  \quad\text{for all $z\in\mathbb H$.}
\end{equation}

If $|V(z)| = |O_V(z)|$ for a single $z\in\mathbb H$, then this holds for all $z\in\mathbb H$, by the strong maximum principle.  Functions $V(z)$ for which this holds are known as \emph{outer functions}.  The function $O_V(z)$ is an example of an outer function.

From \eqref{Riesz1}, we see that the analytic function $I_V(z):=V(z)/O_V(z)$ satisfies $|I_V(z)|\leq 1$ throughout $\mathbb H$.  Moreover, by construction, the boundary values (which exist a.e.) satisfy $|I_V(iy)|=1$.  An analytic function with these two properties is termed an \emph{inner function}.

Evidently, $V(z)=I_V(z) O_V(z) $.  This constitutes an \emph{inner/outer factorization} of $V(z)$.  Such a factorization is unique up to the multiplication of each factor by (complementary) unimodular complex numbers.

We require just one more preliminary before we can state the Beurling--Lax Theorem.  A vector-subspace $\mathcal S$ of $\H(\mathbb H)$ is called \emph{shift invariant} if
\begin{equation}\label{shift inv}
 V(z)\in\mathcal S \implies e^{-z\ell}V(z)\in\mathcal S \text{ for all $\ell\geq 0$.}
\end{equation}
This name becomes more reasonable when we see how shifting a function $v\in L^2([0,\infty))$ to the right by an amount $\ell\geq 0$ affects its Laplace transform $V(z)$:
\begin{equation}\label{shifted LT}
\int_0^\infty e^{-zk} [\chi_{[0,\infty)} \cdot v](k-\ell) \,dk = e^{-z\ell} \int_0^\infty e^{-zk} v(k)\,dk = e^{-z\ell} V(z).
\end{equation}

\begin{theorem}[Beurling--Lax]\label{T:BL} Any nonzero, closed, shift-invariant subspace $\mathcal S$ of $\H^2(\mathbb H)$ is of the form $\mathcal S=J\H^2(\mathbb H)$ for some inner function $J(z)$. 
\end{theorem}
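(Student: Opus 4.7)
The plan is to reduce to the classical Beurling theorem on the unit disk $\D$ by means of the Cayley transform $z \mapsto \tfrac{1-z}{1+z}$. Incorporating an appropriate Jacobian factor, this conformal map implements a unitary isomorphism $\H^2(\mathbb{H}) \to \H^2(\D)$ that identifies the multiplier algebras $H^\infty(\mathbb{H})$ and $H^\infty(\D)$. The shift-invariance condition \eqref{shift inv} is equivalent to invariance under multiplication by all of $H^\infty(\mathbb{H})$, because the semigroup $\{e^{-z\ell}\}_{\ell \geq 0}$ generates this algebra in the weak-$*$ sense; the analogous equivalence holds on the disk for invariance under multiplication by $z$. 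So the theorem reduces to: every nonzero closed subspace $\mathcal{S} \subset \H^2(\D)$ with $z\mathcal{S} \subset \mathcal{S}$ has the form $\mathcal{S} = J\,\H^2(\D)$ for some inner function $J$.

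On the disk, multiplication by $z$ is an isometry of $\H^2(\D)$ since $|z|=1$ a.e.\ on the boundary, and a direct Fourier-coefficient comparison gives $\bigcap_n z^n \H^2(\D) = \{0\}$, hence also $\bigcap_n z^n \mathcal{S} = \{0\}$. Applying the Wold decomposition to the isometry $M_z|_{\mathcal{S}}$ yields the orthogonal-sum representation
\[
\mathcal{S} = \bigoplus_{n=0}^\infty z^n \mathcal{N}, \qquad \mathcal{N} := \mathcal{S} \ominus z\mathcal{S},
\]
with $\mathcal{N}$ nontrivial. The conclusion will follow once I show that $\mathcal{N}$ is one-dimensional and spanned by an inner function $J$; the decomposition then reads $\mathcal{S} = \overline{\bigoplus_n \C\, z^n J} = J\,\H^2(\D)$.

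To verify both properties of $\mathcal{N}$, let $J \in \mathcal{N}$ be a unit vector. Since $z^n J \in z\mathcal{S}$ for every $n \geq 1$, one has $\langle J, z^n J\rangle = 0$ for $n \geq 1$, and by complex conjugation the same holds for $n \leq -1$. Reading these as Fourier coefficients of the boundary function $|J|^2 \in L^1(\partial\D)$ gives $\widehat{|J|^2}(n) = 0$ for all $n \neq 0$, so $|J|^2$ is a.e.\ constant; the normalization $\|J\|_2 = 1$ then forces $|J| = 1$ a.e., i.e.\ $J$ is inner. For the one-dimensionality, suppose for contradiction that $J_1, J_2 \in \mathcal{N}$ are orthonormal. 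Every unit combination $\alpha J_1 + \beta J_2$ again lies in $\mathcal{N}$ and so is inner by the previous step, giving $|\alpha J_1 + \beta J_2| = 1$ a.e.\ on $\partial\D$. Expanding this identity and letting $(\alpha,\beta)$ run through $\bigl(\tfrac{1}{\sqrt 2},\tfrac{1}{\sqrt 2}\bigr)$ and $\bigl(\tfrac{1}{\sqrt 2},\tfrac{i}{\sqrt 2}\bigr)$ forces $\Re(J_1 \bar J_2) = \Im(J_1 \bar J_2) = 0$ a.e., which contradicts $|J_1\bar J_2| = 1$ a.e. Hence $\dim \mathcal{N} = 1$.

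The main obstacle is this last paragraph. The innerness of $J$ is a clean Fourier-coefficient identification, but the one-dimensionality of $\mathcal{N}$ depends on the subtler geometric observation that inner functions cannot populate the entire unit sphere of any two-dimensional subspace of $\H^2$. By contrast, the Cayley transform reduction and the Wold decomposition are routine manipulations involving only standard multiplier-algebra and Hilbert-space orthogonal-decomposition bookkeeping.
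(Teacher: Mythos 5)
The paper does not prove this theorem at all: it cites Lax's paper and Hoffman's book for the half-plane statement, so there is nothing internal to compare against. What you have written is the standard Halmos-style proof of Beurling's theorem (Wold decomposition of the isometry $M_z|_{\mathcal S}$, the wandering subspace $\N=\mathcal S\ominus z\mathcal S$ consists of inner functions and is one-dimensional), prefaced by the standard reduction from the half-plane to the disk. The disk portion of your argument is correct and complete: the computation $\widehat{|J|^2}(n)=\langle J, z^nJ\rangle=0$ for $n\neq 0$ does show $J$ is inner, and your polarization trick with $(\alpha,\beta)=(\tfrac1{\sqrt2},\tfrac1{\sqrt2})$ and $(\tfrac1{\sqrt2},\tfrac{i}{\sqrt2})$ correctly forces $J_1\bar J_2=0$ a.e., contradicting innerness; the more usual route is to note directly that $\langle z^nJ_1,z^mJ_2\rangle=0$ for all $n,m\ge0$ kills every Fourier coefficient of $J_1\bar J_2$, but both work.

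The one step you should not dismiss as routine is the assertion that invariance under the semigroup $\{e^{-\ell z}\}_{\ell\ge0}$ implies invariance under all of $H^\infty(\mathbb H)$ (equivalently, under multiplication by the Cayley image $\tfrac{1-z}{1+z}$ of the disk shift). This is exactly the content separating Lax's theorem from Beurling's --- the Cayley transform does \emph{not} carry the exponentials to powers of $z$ --- and the weak-$*$ density of $\spaan\{e^{-\ell z}:\ell\ge0\}$ in $H^\infty(\mathbb H)$ requires proof (via Hardy-space duality and an F.~and M.~Riesz-type argument). A cleaner patch for your purposes: since $(1+z)^{-1}=\int_0^\infty e^{-\ell(1+z)}\,d\ell$ converges as a Bochner integral of shifts applied to any $V\in\mathcal S$, every closed shift-invariant $\mathcal S$ is invariant under multiplication by $(1+z)^{-1}$, hence by $\tfrac{1-z}{1+z}=-1+\tfrac{2}{1+z}$, which is all that the disk argument needs. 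With that supplied, your proof is correct.
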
 

This is the form of the theorem presented (and proved) in \cite{Hoffman}.  Historically, Beurling \cite{Beurling} proved the analogue of this theorem for analytic functions on the unit disk with multiplication by $z^n$, $n\in\mathbb N$, which corresponds to a shift of the Taylor coefficients.  The half-plane form stated above was subsequently proved by Lax \cite{Lax}.  A proof of the Beurling formulation can be found in both \cite{Garnett} and \cite{Hoffman}.  In \cite{Hoffman}, it is also shown how one may deduce each version of the result from the other.

We may now demonstrate how the Beurling--Lax Theorem solves the problem stated at the beginning of this subsection.   

\begin{corollary}\label{C:BL} Suppose $v\in L^2([0,\infty))$ and $V(z)$, defined by \eqref{v to V}, is outer. If $\varphi\in L^2([0,\infty))$ satisfies \eqref{BL question}, then $\varphi\equiv 0$.

Conversely, if $v\in L^2([0,\infty))$ and $V(z)$ is not outer, then there is a non-zero $\varphi\in L^2([0,\infty))$ so that \eqref{BL question} holds.
\end{corollary}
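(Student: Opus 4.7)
The plan is to recast \eqref{BL question} as an orthogonality relation in $\H^2(\mathbb H)$ and then invoke Theorem~\ref{T:BL}. Let $\mathcal S$ denote the smallest closed shift-invariant subspace of $\H^2(\mathbb H)$ containing $V$, equivalently the $\H^2$-closure of $\operatorname{span}\{e^{-z\ell}V(z):\ell\geq 0\}$. Combining the inner product formula \eqref{2.11} with the shift rule \eqref{shifted LT} yields
$$
2\pi\int_0^\infty \overline{\varphi(k+\ell)}\,v(k)\,dk=\langle\Phi,\,e^{-z\ell}V\rangle_{\H^2},
$$
so \eqref{BL question} is equivalent to $\Phi\perp\mathcal S$. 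Since the Paley--Wiener theorem identifies $L^2([0,\infty))$ with $\H^2(\mathbb H)$, non-zero $\varphi$ correspond bijectively to non-zero $\Phi$.

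Next, Theorem~\ref{T:BL} supplies an inner function $J$ with $\mathcal S = J\H^2(\mathbb H)$, and I would identify $J$ (up to a unimodular constant) with the inner factor $I_V$ of $V$. Since $V\in\mathcal S$, write $V=JH$ for some $H\in\H^2(\mathbb H)$; decomposing $H=I_HO_H$ gives $V=(JI_H)O_H$. By the uniqueness portion of the inner/outer factorization (recorded after \eqref{Riesz1}), $JI_H = \alpha I_V$ for some unimodular constant $\alpha$.

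Both parts of the corollary now follow. If $V$ is outer, then $I_V$ is a unimodular constant, so $JI_H$ is a constant of modulus $1$; combined with $|J(z)|,|I_H(z)|\leq 1$ on $\mathbb H$, this forces $|J|\equiv 1$ on $\mathbb H$, hence $J$ is constant (any analytic function of constant modulus is). Thus $\mathcal S=\H^2(\mathbb H)$, and $\Phi\perp\H^2(\mathbb H)$ forces $\Phi\equiv 0$, so $\varphi\equiv 0$. Conversely, if $V$ is not outer, then $I_V$ is non-constant inner and multiplication by $I_V$ is an isometry on $\H^2(\mathbb H)$ whose range $I_V\H^2(\mathbb H)$ is a proper closed shift-invariant subspace containing $V$. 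Hence $\mathcal S\subseteq I_V\H^2(\mathbb H)\subsetneq\H^2(\mathbb H)$, and any non-zero element of $\mathcal S^\perp$ is the Laplace transform of a non-zero $\varphi\in L^2([0,\infty))$ solving \eqref{BL question}.

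The main obstacle I anticipate is cleanly justifying the strict inclusion $I_V\H^2(\mathbb H)\subsetneq\H^2(\mathbb H)$ in the non-outer case. Unlike the disk setting, the constant function $1$ does not lie in $\H^2(\mathbb H)$, so one cannot simply test whether $1\in I_V\H^2(\mathbb H)$; instead, one can test with the outer factor $O_V\in\H^2(\mathbb H)$ and use \eqref{Riesz1} applied to the putative quotient $h=O_V/I_V$ to force $|I_V|\equiv 1$ on $\mathbb H$, contradicting non-constancy. The remaining assertions are bookkeeping with the Hardy-space machinery already collected in Section~\ref{S:BL}.
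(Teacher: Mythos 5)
Your proposal is correct and follows essentially the same route as the paper: reformulate \eqref{BL question} as $\Phi\perp\mathcal S_V$ via \eqref{2.11} and \eqref{shifted LT}, apply the Beurling--Lax Theorem to identify $\mathcal S_V=J\H^2(\mathbb H)$ with $J$ agreeing with $I_V$ up to a unimodular constant, and conclude via uniqueness of the inner/outer factorization in both directions. Your resolution of the anticipated obstacle (testing membership of $O_V$ in $I_V\H^2(\mathbb H)$ and using \eqref{Riesz1} to force $|I_V|\equiv 1$) is just a spelled-out version of the paper's observation that $\mathcal S_V$ contains no outer functions; the only detail worth adding is to dispose of the trivial case $V\equiv 0$ separately, as the paper does, before invoking Beurling--Lax.
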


\begin{proof}
Viewed through the lens of \eqref{2.11} and \eqref{shifted LT}, we see that \eqref{BL question} becomes
\begin{equation}\label{2.17}
\langle \Phi(z), e^{-z\ell} V(z) \rangle_{\H^2} =0 \quad\text{for all $\ell\geq 0$}.
\end{equation}
This is equivalent to saying that $\Phi$ is orthogonal to
\begin{equation}\label{2.18}
\mathcal S_V:= \overline{\spaan\{ e^{-\ell z} V(z) : \ell\geq 0\}},
\end{equation}
where the closure is taken in $\H^2(\mathbb H)$.  In this way, the corollary is reduced to the following assertion:
\begin{equation}\label{2.19}
\text{$V(z)$ is outer}   \iff   \mathcal S_V=\H^2(\mathbb H).
\end{equation}
We note that $\mathcal S_V=\{0\}$ if and only if $V\equiv 0$ (which is not outer) and so may exclude these cases from further consideration. 

As  $\mathcal S_V$ is shift invariant, it admits the representation $\mathcal S_V=J(z)\H^2(\mathbb H)$ for some inner function $J(z)$. In particular, $V(z)$ admits the representation $V(z)=J(z)W(z)$ for some $W(z)\in\H^2(\mathbb H)$.  Factoring $W(z)$ yields
\begin{equation}\label{2.20}
V(z)=J(z)I_W(z)O_W(z).
\end{equation}
This constitutes an inner/outer factorization of $V(z)$; the inner factor is $J(z)I_W(z)$ and the outer factor is $O_W(z)$.

Suppose now that $V(z)$ is outer.  By uniqueness of the factorization, it follows that $J(z)I_W(z)$ is a unimodular constant and consequently,
$$
\mathcal S_V= J(z)\H^2(\mathbb H)  \supseteq J(z)I_W(z)\H^2(\mathbb H) = \H^2(\mathbb H).
$$
Thus, when $V(z)$ is outer, $\mathcal S_V=\H^2(\mathbb H)$.

To prove the converse, we now suppose that $V(z)=I_V(z) O_V(z)$ is not outer, which is to say, $I_V(z)$ is not a unimodular constant.  Evidently, $e^{-\ell z}V(z)$ belongs to $I_V\H^2(\mathbb H)$ for all $\ell\geq 0$.  As this space is $\H^2$-closed, it follows that $\mathcal S_V\subseteq I_V\H^2(\mathbb H)$.  The uniqueness (modulo unimodular constants) of the inner/outer factorization together with the maximum modulus principle then shows that $\mathcal S_V$ contains no outer functions and consequently, $\mathcal S_V\neq \H^2(\mathbb H)$.
\end{proof}

In order to prove Theorem~\ref{T}, we will need to show that the convolution equation \eqref{BL question} has a unique solution for a very specific choice of $v(k)$.  Although we are able to compute the Laplace transform $V(z)$ of this function, the expression is quite complicated.  With this in mind, it is convenient to have a simple direct criterion for demonstrating that $V(z)$ is outer:

\begin{proposition}\label{P:outer}
Suppose $V(z)\in \H^2(\mathbb H)$ and $W(z):=(1+z)^{-n} / V(z)\in \H^2(\mathbb H)$ for some $n\in \mathbb N$.  Then $V(z)$ is an outer function.
\end{proposition}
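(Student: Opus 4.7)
The plan is to exploit uniqueness (up to unimodular constants) and multiplicativity of the inner/outer factorization in the Hardy class, followed by a maximum modulus argument. I would start by factoring $V = I_V O_V$ as in the construction preceding \eqref{Riesz1}, and likewise factoring $W = I_W O_W$. The goal is then to show that $I_V$ reduces to a unimodular constant. The hypothesis is equivalent to the identity
\[
V(z)\,W(z) = (1+z)^{-n},
\]
with both sides lying in $\H^2(\mathbb{H})$ (one checks directly that $(1+z)^{-n} \in \H^2(\mathbb{H})$ for $n \geq 1$).

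The key analytic input is that $(1+z)^{-n}$ is itself outer. This follows by observing that $\log|(1+z)^{-n}| = -n\log|1+z|$ is a harmonic function on $\mathbb{H}$ whose boundary values $-\tfrac{n}{2}\log(1+y^2)$ are locally integrable against $dy/(1+y^2)$, and by verifying at a single reference point (say $z=1$, using the classical evaluation $\int_{\R}\log(1+y^2)/(1+y^2)\,dy = 2\pi\log 2$) that this harmonic function agrees with the Poisson integral of its boundary values. Comparing with \eqref{OV defn} identifies $(1+z)^{-n}$ with its own outer factor.

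Next I would invoke the fact that the inner/outer factorization is multiplicative: the product $I_V I_W$ is inner (it is bounded by $1$ in $\mathbb{H}$ and unimodular on the boundary), and $O_V O_W$ is outer (by linearity of the Poisson integral in the exponent of \eqref{OV defn}). Uniqueness of the inner/outer factorization applied to $I_VI_W \cdot O_VO_W = (1+z)^{-n}$, combined with outerness of $(1+z)^{-n}$, forces $I_V(z) I_W(z)$ to be a unimodular constant throughout $\mathbb{H}$. Since $|I_V|, |I_W| \leq 1$ in $\mathbb{H}$ while $|I_V I_W| \equiv 1$, the maximum modulus principle forces $|I_V(z)| \equiv 1$, so $I_V$ is a unimodular constant and $V$ is outer.

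The main obstacle is really bookkeeping rather than substance: spelling out the two standard Hardy-space facts used---outerness of $(1+z)^{-n}$ and multiplicativity of the inner/outer factorization under products---cleanly enough to invoke them. Conceptually, the hypothesis $W = (1+z)^{-n}/V \in \H^2$ is precisely what prevents $V$ from carrying any non-trivial inner factor: a Blaschke zero of $V$ inside $\mathbb{H}$, or a singular inner part of $V$ on the boundary, would have to manifest as a singularity of $W$ that is incompatible with $W \in \H^2(\mathbb{H})$.
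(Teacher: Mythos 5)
Your proof is correct and follows essentially the same route as the paper's: verify that $(1+z)^{-n}$ is outer by checking the Poisson representation of $-n\log|1+z|$ at $z=1$, factor $V$ and $W$ into inner and outer parts, use uniqueness of the factorization to conclude $I_V I_W$ is a unimodular constant, and finish with the maximum modulus principle. No substantive differences.
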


\begin{proof}
We begin by showing that $z\mapsto (1+z)^{-n}$ is an outer function.  As mentioned earlier, this means verifying that $-n\log|1+z|$ is equal to the Poisson integral of its boundary values at a least one point $z\in\mathbb H$.  This is easily checked at $z=1$, noting that
$$
\int_{-\infty}^\infty  \frac{\log| 1 + i y|}{\pi(1+y^2)}\,dy= \log(2), 
$$
as may be verified, for example, by the Cauchy integral formula.
  
Performing an inner/outer factorization of both $V(z)$ and $W(z)$, we find
\begin{equation*}
(1+z)^{-n} = V(z)W(z) = I_V(z)I_W(z) \cdot O_V(z)O_W(z).
\end{equation*}
By uniqueness of the inner/outer factorization, we deduce that $I_V(z)I_W(z)$ is a unimodular constant.  By the maximum modulus principle, this implies that both $I_V(z)$ and $I_W(z)$ are unimodular constants. Thus, $V(z)$ (and also $W(z)$) is an outer function.\end{proof}

\section{Small-data scattering}\label{S:scatter}

In this section, we will establish a small-data scattering theory for nonlinear Schr\"odinger equations of the form \eqref{nls} with admissible nonlinearities (in the sense of Definition~\ref{D}).  We will employ the function spaces introduced in \eqref{spaces} and \eqref{spaces'}, as well as the notation $s(p)=\tfrac{d}{2}-\tfrac{2}{p}$.

\begin{theorem}[Small data scattering]\label{T:scatter} Let $F$ be admissible with parameters $(p_0,p_1)$ and define
\[
B_\eta = \{f\in H^{s(p_1)}:\|f\|_{H^{s(p_1)}}<\eta\},\quad \eta>0.
\]

There exists $\eta>0$ sufficiently small so that for any $u_-\in B_\eta$, there exists a unique global solution $u:\R\times\R^d\to\C$ to \eqref{nls} with
\[
\lim_{t\to-\infty}\|u(t)-e^{it\Delta}u_-\|_{H^{s(p_1)}}=0.
\]
This solution satisfies the global space-time bounds
\[
\| |\nabla|^{s(p)}u\|_{\bar X_p\cap Y} \lesssim \|u_-\|_{H^{s(p)}},\quad p\in\{p_0,p_1\}
\]
and scatters to a unique $u_+\in H^{s(p_1)}$ as $t\to\infty$, that is,
\[
\lim_{t\to\infty} \|u(t)-e^{it\Delta}u_+\|_{H^{s(p_1)}}=0.
\]
\end{theorem}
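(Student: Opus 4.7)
My plan is to construct $u$ as the fixed point of the Duhamel map
\[
\Phi(v)(t) := e^{it\Delta}u_- - i\int_{-\infty}^t e^{i(t-s)\Delta}F(s,x,v(s))\,ds
\]
on the complete metric space of functions $v:\R\times\R^d\to\C$ satisfying the simultaneous bounds $\||\nabla|^{s(p)}v\|_{\bar X_p\cap Y}\leq 2C\|u_-\|_{H^{s(p)}}$ for both $p=p_0$ and $p=p_1$, equipped with the distance induced by $\|v_1-v_2\|_{\bar X_{p_0}\cap \bar X_{p_1}\cap Y}$ (taken without derivatives so the space is complete). The linear piece $e^{it\Delta}u_-$ satisfies the required bounds directly from the Strichartz estimate, since $Y$ and $\bar X_p$ are Schr\"odinger admissible of scaling $s=0$. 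Writing the Duhamel term via a dual Strichartz estimate, the fixed-point problem reduces to controlling $\||\nabla|^{s(p)}F(\,\cdot\,,v)\|_{Y'}$ for $p\in\{p_0,p_1\}$.

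The heart of the proof is a pair of nonlinear estimates. At the $L^2$ level ($p=p_0$, no derivatives), \eqref{ad B} yields $|F(t,x,u)|\lesssim |u|^{p_0+1}+|u|^{p_1+1}$, so the H\"older bound $\||u|^p u\|_{Y'}\lesssim \|u\|_{X_p}^p\|u\|_Y$ together with Sobolev embedding \eqref{embedding} gives
\[
\|F(\,\cdot\,,v)\|_{Y'} \lesssim \sum_{p\in\{p_0,p_1\}} \||\nabla|^{s(p)}v\|_{\bar X_p}^p\,\|v\|_Y.
\]
At the $H^{s(p_1)}$ level I apply Proposition~\ref{P:FCR} to the map $(x,u)\mapsto F(t,x,u)$ at each fixed $t$: the assumptions $\partial_x^\alpha\rho(t,x,0)\equiv 0$ for $|\alpha|\leq 1$ and the pointwise bound \eqref{ad B} on $|\partial_x^\alpha\partial_\lambda\rho|$ are precisely what the proposition requires, with Lipschitz majorant $L(u)\lesssim |u|^{p_0}+|u|^{p_1}$. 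Integrating in time by H\"older then produces a bound of the schematic form
\[
\||\nabla|^{s(p_1)}F(\,\cdot\,,v)\|_{Y'} \lesssim \sum_{p\in\{p_0,p_1\}} \|v\|_{X_p}^p \, \||\nabla|^{s(p_1)}v\|_{\bar X_{p_1}},
\]
where the exponents $r_1,r_2$ in Proposition~\ref{P:FCR} are chosen so that the spatial factors pair into $Y'$ and the resulting time H\"older exponents close thanks to $s(\bar X_p)=0$.

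Both estimates are superlinear in the iteration norms, so choosing $\eta$ small forces $\Phi$ to map the ball into itself. An analogous difference estimate, based on the mean-value bound $|F(u_1)-F(u_2)|\lesssim[L(u_1)+L(u_2)]|u_1-u_2|$ together with the corresponding control on $\partial_x F$, delivers the contraction property on the weaker distance. I expect the main technical obstacle to be the derivative estimate: checking that the output of Proposition~\ref{P:FCR} assembles into a H\"older configuration that lies in $Y'$ and involves only the two norms $\||\nabla|^{s(p)}v\|_{\bar X_p\cap Y}$ appearing in my iteration space. The restriction $s(p_1)\leq 1$ in Definition~\ref{D} is what ensures that I never need to differentiate $F$ more than once, keeping matters within the reach of Proposition~\ref{P:FCR}.

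Once $u$ is obtained with global space-time bounds, scattering follows by the standard argument. Since $\||\nabla|^{s(p_1)}F(\,\cdot\,,u)\|_{Y'}<\infty$ on all of $\R$, Strichartz shows that $e^{-it\Delta}u(t)$ is Cauchy in $H^{s(p_1)}$ as $t\to+\infty$, so
\[
u_+ := u_- - i\int_{\R} e^{-is\Delta}F(s,x,u(s))\,ds
\]
is well-defined in $H^{s(p_1)}$ and $\|u(t)-e^{it\Delta}u_+\|_{H^{s(p_1)}}\to 0$. Uniqueness of the backward-scattering solution is inherited from the contraction principle applied on a sufficiently negative half-line $(-\infty,T]$, which extends globally by the a priori bounds just derived.
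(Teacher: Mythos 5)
Your proposal is correct and follows essentially the same route as the paper: the same Duhamel fixed-point map on a ball defined by the simultaneous $\||\nabla|^{s(p)}v\|_{\bar X_p\cap Y}$ bounds for $p\in\{p_0,p_1\}$, with the nonlinearity estimated via H\"older at the $L^2$ level and Proposition~\ref{P:FCR} at the $H^{s(p_1)}$ level, contraction in a derivative-free metric, and forward scattering from the Cauchy property of $e^{-it\Delta}u(t)$. The only (immaterial) difference is that the paper contracts in the $Y$ norm alone rather than in $\bar X_{p_0}\cap\bar X_{p_1}\cap Y$.
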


Using Theorem~\ref{T:scatter}, we can define the small-data scattering map for an admissible nonlinearity $F$:

\begin{definition}\label{D:S} Under the hypotheses of Theorem~\ref{T:scatter}, we define the \emph{scattering map} $S_F:B_\eta\to H^{s(p_1)}$ by $S_F(u_-)=u_+$. 
\end{definition}

\begin{proof}[Proof of Theorem~\ref{T:scatter}] Define the map
\[
\Phi(u) = e^{it\Delta}u_- - i\int_{-\infty}^t e^{i(t-s)\Delta}F(s,x,u(s))\,ds. 
\]
We let $(Z,d)$ be the complete metric space given by
\begin{align*}
Z = \bigl\{u:\R\times\R^d\to\C: & \||\nabla|^{s(p)}u\|_{\bar X_p\cap Y} \leq 4C\|u_-\|_{H^{s(p)}}\text{ for each } p\in\{p_0,p_1\}\bigr\}
\end{align*}
and distance function
\[
d(u,v) = \|u-v\|_Y. 
\]
Here $C>0$ is a universal constant dictated by the implicit constants appearing in Strichartz estimates, Sobolev embedding, and the fractional chain rule. Throughout the proof, space-time norms are taken over $\R\times\R^d$ unless otherwise indicated. 

We first prove that for $\eta$ sufficiently small, $\Phi:Z\to Z$.  Given $u\in Z$, Strichartz estimates, the assumptions on $F$, and H\"older's inequality show
\begin{align*}
\|\Phi(u)\|_{\bar X_{p_0}\cap Y} & \lesssim \|u_-\|_{L^2} + \|F(t,x,u)\|_{Y'} \\
& \lesssim \|u_-\|_{L^2}+\sum_{p\in\{p_0,p_1\}} \| |u|^p u\|_{Y'} \\
& \lesssim \|u_-\|_{L^2} + \sum_p \|u\|_{X_p}^p \|u\|_Y.
\end{align*}
Using \eqref{embedding} and the fact that $u\in Z$, we therefore obtain
\begin{align*} 
\|\Phi(u)\|_{\bar X_{p_0}\cap Y}  & \lesssim \|u_-\|_{L^2} + \sum_p \| |\nabla|^{s(p)} u\|_{\bar X_p}^p \|u_-\|_{L^2} \\
&\lesssim \|u_-\|_{L^2} + \sum_p \|u_-\|_{H^{s(p)}}^p\|u_-\|_{L^2} \\
& \lesssim \|u_-\|_{L^2} + [\eta^{p_0}+\eta^{p_1}]\|u_-\|_{L^2} \leq 4C\|u_-\|_{L^2}
\end{align*}
for $\eta$ sufficiently small. Next, we use the fractional chain rule (Proposition~\ref{P:FCR}), the assumptions on $F$, H\"older, and \eqref{embedding} to obtain
\begin{align*}
\| |\nabla&|^{s(p_1)} \Phi(u)\|_{\bar X_{p_1} \cap Y} \\
& \lesssim \| |\nabla|^{s(p_1)}u_-\|_{L^2} + \||\nabla|^{s(p_1)} F(t,x,u)\|_{Y'} \\
& \lesssim \|u_-\|_{H^{s(p_1)}} + \biggl\| \biggl\|\sum_{p\in\{p_0,p_1\}} |u|^p\biggr\|_{L_x^{\frac{d+2}{2}}} \| u\|_{H^{s(p_1),\frac{2(d+2)}{d}}}\biggr\|_{L_t^{\frac{2(d+2)}{d+4}}} \\
& \lesssim \|u_-\|_{H^{s(p_1)}} + \sum_{p\in\{p_0,p_1\}} \|u\|_{X_p}^p\bigl[\|u\|_Y + \||\nabla|^{s(p_1)}u\|_Y\bigr] \\
& \lesssim \|u_-\|_{H^{s(p_1)}} + [\eta^{p_0}+\eta^{p_1}] \|u_-\|_{H^{s(p_1)}}  \leq 4C\|u_-\|_{H^{s(p_1)}} 
\end{align*}
for $\eta$ sufficiently small.   It follows that $\Phi:Z\to Z$.

Next we prove that $\Phi$ is a contraction. Using the properties of $F$ and estimating as above, we have that for $u,v\in Z$, 
\begin{align*}
\|u-v\|_Y & \lesssim \|F(t,x,u)-F(t,x,v)\|_{Y'} \\
& \lesssim \sum_{p\in\{p_0,p_1\}}\| (|u|^p + |v|^p)(u-v)\|_{Y'} \\
& \lesssim \sum_{p\in\{p_0,p_1\}}\biggl[\|u\|_{X_p}^p + \|v\|_{X_p}^p\biggr]\|u-v\|_Y \\
& \lesssim [\eta^{p_0}+\eta^{p_1}]\|u-v\|_Y \leq \tfrac12\|u-v\|_Y
\end{align*}
for $\eta$ sufficiently small. 

It follows that $\Phi$ has a unique fixed point $u\in Z$, which yields the desired solution satisfying $e^{-it\Delta}u(t)\to u_-$ as $t\to-\infty$.

To prove scattering forward in time, we repeat the estimates above to show that $\{e^{-it\Delta}u(t)\}$ is Cauchy in $H^{s(p_1)}$ as $t\to\infty$.  Indeed, writing $Y'(s,t)$ for the space-time norm over $(s,t)\times\R^d$ (and similarly for the other norms), we have
\begin{align*}
\|e^{-it\Delta}u(t)-e^{-is\Delta}u(s)\|_{H^{s(p_1)}} & \lesssim \|F(t,x,u)\|_{Y'(s,t)}+\| |\nabla|^{s(p_1)}F(t,x,u)\|_{Y'(s,t)} \\
& \lesssim [\eta^{p_0}+\eta^{p_1}][\|u\|_{Y(s,t)} + \||\nabla|^{s(p_1)}u\|_{Y(s,t)}]  \to 0  
\end{align*}
as $s,t\to\infty$. Letting $u_+$ denote the limit of $e^{-it\Delta}u(t)$ in $H^{s(p_1)}$, we obtain the last claim in the theorem.  In particular, we obtain the identity \eqref{implicit-formula} for the scattering map:
\begin{equation*}
S_F(u_-)(x)= u_+(x)= u_-(x) - i \int_{-\infty}^\infty e^{-it\Delta} F(t,x,u(t,x))\, dt. \qedhere
\end{equation*}
\end{proof}

\section{Reduction to an inverse convolution problem}\label{S:Reduction}

The goal of this section is to prove Proposition~\ref{P:agree} and Corollary~\ref{C:conv0} below.  These results reduce the proof of Theorem~\ref{T} to an inverse convolution problem.

We recall the notation
$$
G(t,x,|u|^2)=\bar u F(t,x,u) .
$$
We will call $G$ the \emph{potential} associated to $F$.  This is not equal to the potential energy density, but does have the same dimensionality. 

In the next lemma, we show that we may approximate the full solution $u(t)$ in the implicit formula \eqref{implicit-formula} by its first Picard iterate, namely, $e^{it\Delta}u_-$, up to acceptable errors.  The precise statement is the following: 

\begin{lemma}[Born approximation]\label{L:Born} Let $F$ be admissible with parameters $(p_0,p_1)$, potential $G$, and scattering map $S:B_\eta\to H^{s(p_1)}$. Then for any $\varphi\in B_\eta$, 
\[
i\langle (S-I)\varphi,\varphi\rangle = \iint G(t,x,|e^{it\Delta}\varphi|^2)\,dx\,dt + \mathcal{O}\biggl[ \sum_{p\in\{p_0,p_1\}}\|\varphi\|_{H^{s(p)}}^{2p}\|\varphi\|_{L^2}^2\biggr].
\] 
\end{lemma}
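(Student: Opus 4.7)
Starting from \eqref{implicit-formula} applied to $u_-=\varphi$, I pair with $\varphi$ and exploit the fact that $e^{-it\Delta}$ is symmetric under the complex-bilinear pairing $(f,g)\mapsto \int fg\,dx$ (its Fourier multiplier $e^{it|\xi|^2}$ is even in $\xi$) to transfer the free evolution onto $\varphi$. This produces the working identity
\begin{equation*}
i\langle (S-I)\varphi,\varphi\rangle \;=\; \iint F(t,x,u(t,x))\,\overline{e^{it\Delta}\varphi(x)}\,dx\,dt,
\end{equation*}
where $u$ is the solution to \eqref{nls} scattering to $\varphi$ as $t\to -\infty$. Setting $w:=e^{it\Delta}\varphi$ and noting that $\bar w\,F(t,x,w)=G(t,x,|w|^2)$ (since $\rho$ is real valued), the right-hand side splits into the advertised main term plus a Born-type error
\begin{equation*}
\E \;:=\; \iint \bigl[F(t,x,u)-F(t,x,w)\bigr]\,\bar w(t,x)\,dx\,dt.
\end{equation*}

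The rest of the argument reduces to estimating $\E$. Integrating \eqref{ad B} along a segment from $w$ to $u$ in $\C$ and using $\rho(t,x,0)=0$ yields the pointwise Lipschitz bound
\begin{equation*}
|F(t,x,u)-F(t,x,w)| \;\lesssim\; \sum_{p\in\{p_0,p_1\}}\bigl(|u|^p+|w|^p\bigr)\,|u-w|.
\end{equation*}
A triple H\"older with exponent sum $\tfrac{2}{d+2}+\tfrac{d}{2(d+2)}+\tfrac{d}{2(d+2)}=1$ (placing $|u|^p$ or $|w|^p$ in $L_{t,x}^{(d+2)/2}$ and the remaining two factors in $Y$) gives
\begin{equation*}
|\E| \;\lesssim\; \sum_{p\in\{p_0,p_1\}}\bigl(\|u\|_{X_p}^{p}+\|w\|_{X_p}^{p}\bigr)\,\|u-w\|_Y\,\|w\|_Y.
\end{equation*}
Strichartz gives $\|w\|_Y\lesssim\|\varphi\|_{L^2}$, and Theorem~\ref{T:scatter} together with the Sobolev embedding \eqref{embedding} gives $\|u\|_{X_p}+\|w\|_{X_p}\lesssim\|\varphi\|_{H^{s(p)}}$. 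For $\|u-w\|_Y$ I invoke the Duhamel representation $u-w=-i\int_{-\infty}^t e^{i(t-s)\Delta}F(s,x,u(s))\,ds$, the inhomogeneous Strichartz estimate with the dual pair $(Y,Y')$, and one more application of the nonlinear bound:
\begin{equation*}
\|u-w\|_Y \;\lesssim\; \|F(\cdot,\cdot,u)\|_{Y'} \;\lesssim\; \sum_{q\in\{p_0,p_1\}}\|u\|_{X_q}^{q}\|u\|_Y \;\lesssim\; \sum_{q\in\{p_0,p_1\}}\|\varphi\|_{H^{s(q)}}^{q}\,\|\varphi\|_{L^2}.
\end{equation*}

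Assembling these pieces,
\begin{equation*}
|\E| \;\lesssim\; \Bigl[\sum_{p\in\{p_0,p_1\}}\|\varphi\|_{H^{s(p)}}^{p}\Bigr]^{2}\|\varphi\|_{L^2}^{2} \;\lesssim\; \sum_{p\in\{p_0,p_1\}}\|\varphi\|_{H^{s(p)}}^{2p}\,\|\varphi\|_{L^2}^{2},
\end{equation*}
where the last step uses $(a+b)^2\leq 2(a^2+b^2)$ to absorb the cross-term $\|\varphi\|_{H^{s(p_0)}}^{p_0}\|\varphi\|_{H^{s(p_1)}}^{p_1}$ into the diagonal bound asserted by the lemma. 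The argument is essentially a single nonlinear iteration past the linear profile; I do not anticipate any substantive obstacle. The one place to be careful is to control $\|u-w\|_Y$ by a power $\|\varphi\|_{H^{s(p)}}^{p}\|\varphi\|_{L^2}$ (via one nonlinear Duhamel step) rather than the trivial $\|\varphi\|_{L^2}$ that Strichartz alone would produce — it is precisely this gain that upgrades a linear error to the claimed quadratic-plus-small error.
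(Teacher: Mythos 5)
Your proposal is correct and follows essentially the same route as the paper: the same Duhamel/unitarity reduction to the error term $\iint [F(t,x,u)-F(t,x,e^{it\Delta}\varphi)]\overline{e^{it\Delta}\varphi}\,dx\,dt$, the same pointwise Lipschitz bound from admissibility, and the same Strichartz--H\"older--Duhamel chain to gain the extra power of $\|\varphi\|_{H^{s(p)}}^{p}\|\varphi\|_{L^2}$ on $\|u-e^{it\Delta}\varphi\|_Y$ (your $u-w$ is the paper's $N$). The only cosmetic difference is that the paper invokes the adjoint of $e^{-it\Delta}$ under the Hermitian pairing rather than your bilinear-pairing argument; the cross-term absorption at the end is also handled identically in substance.
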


\begin{proof} By the Duhamel formula,
\[
i\langle (S-I)\varphi,\varphi\rangle = \int\langle F(t,x,u),e^{it\Delta}\varphi\rangle \,dt,
\]
where $u$ is the solution to \eqref{nls} with $e^{-it\Delta}u(t)\to\varphi$ as $t\to-\infty$. Thus it suffices to prove that
\begin{equation}\label{Born-NTS}
\int\langle F(t,x,u)-F(t,x,e^{it\Delta}\varphi),e^{it\Delta}\varphi\rangle\,dt = \mathcal{O}\biggl[\sum_{p\in\{p_0,p_1\}}\|\varphi\|_{H^{s(p)}}^{2p}\|\varphi\|_{L^2}^2\biggr].
\end{equation}

To prove this, we first introduce
\[
N(t):=u(t)-e^{it\Delta}\varphi=-i\int_{-\infty}^t e^{i(t-s)\Delta} F(s,x,u(s))\,ds 
\]
and notice that
\[
|F(t,x,u)-F(t,x,e^{it\Delta}\varphi)|\lesssim\sum_{p\in\{p_0,p_1\}}\bigl[ |u|^p + |e^{it\Delta}\varphi|^p\bigr] |N(t)|
\]
uniformly in $(t,x)$. Using Strichartz estimates, we obtain 
\begin{align*}
\|N\|_Y &\lesssim \|F(t,x,u)\|_{Y'}  \lesssim \sum_{p\in\{p_0,p_1\}}\|u\|_{X_p}^p \|u\|_{Y}  \lesssim \sum_{p\in\{p_0,p_1\}}\|\varphi\|_{H^{s(p)}}^p \|\varphi\|_{L^2}. 
\end{align*}
Thus
\begin{align*}
|\text{LHS}\eqref{Born-NTS}| & \lesssim \|e^{it\Delta}\varphi\|_Y \biggl\| \sum_{p\in\{p_0,p_1\}}[|u|^p + |e^{it\Delta}\varphi|^p]\, N \biggr\|_{Y'} \\
& \lesssim \|\varphi\|_{L^2} \sum_{p\in\{p_0,p_1\}}\bigl[\|u\|_{X_p}^p + \|e^{it\Delta}\varphi\|_{X_p}^p\bigr]\|N\|_Y \\
& \lesssim \sum_{p\in\{p_0,p_1\}}\|\varphi\|_{H^{s(p)}}^p\|\varphi\|_{L^2} \|N\|_Y \\
& \lesssim \sum_{p\in\{p_0,p_1\}}\|\varphi\|_{H^{s(p)}}^{2p}\|\varphi\|_{L^2}^2,
\end{align*}
as desired. 
\end{proof}

Next, we wish to localize the potential to a fixed point in space-time. 

\begin{lemma}[Space-time localization]\label{L:localize} Let $F$ be admissible, with potential $G$. Fix $(t_0,x_0)\in\R^d$ and $\psi\in\mathcal{S}(\R^d)$. Let
\[
v(t,x)=[e^{it\Delta}\psi](x)\qtq{and}v_\sigma(t,x)=\bigl(e^{it\Delta}[\psi(\tfrac{\cdot}{\sigma})]\bigr)(x)=v(\tfrac{t}{\sigma^2},\tfrac{x}{\sigma}).
\]
Then
\begin{align*}
\iint & G(t,x,|v_\sigma(t-t_0,x-x_0))|^2\,dx\,dt  \\
& = \sigma^{d+2} \iint G(t_0,x_0, |v(t,x)|^2)\,dx\,dt + o_\psi(\sigma^{d+2})\qtq{as}\sigma\to0.
\end{align*}
\end{lemma}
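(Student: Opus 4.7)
\emph{Proof plan.} The strategy is to absorb the scaling by a change of variables and then conclude via dominated convergence, using the joint continuity of $G$ in $(t,x)$ and the uniform growth bounds from Definition~\ref{D}. First I would change variables via $t=t_0+\sigma^2 t'$, $x=x_0+\sigma x'$, whose Jacobian is $\sigma^{d+2}$. The definitions of $v$ and $v_\sigma$ give $v_\sigma(t-t_0,x-x_0)=v(t',x')$, so the left-hand side rewrites as
\[
\sigma^{d+2}\iint G\bigl(t_0+\sigma^2 t',\,x_0+\sigma x',\,|v(t',x')|^2\bigr)\,dx'\,dt',
\]
reducing the claim to showing that the integral of the difference $G(t_0+\sigma^2 t',x_0+\sigma x',|v|^2)-G(t_0,x_0,|v|^2)$ over $\R\times\R^d$ tends to zero as $\sigma\to 0$.

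Pointwise convergence of the integrand to $0$ is immediate from the joint continuity of $F$ together with the identity $G(t,x,|u|^2)=\bar u F(t,x,u)$. For a $\sigma$-uniform dominating function I would exploit the admissibility bounds of Definition~\ref{D}: integrating $|\partial_\lambda \rho(t,x,\lambda)|\lesssim \lambda^{p_0/2-1}+\lambda^{p_1/2-1}$ and using the $\alpha=0$ case of $\partial_x^\alpha\rho(t,x,0)\equiv 0$ yields $|\rho(t,x,\lambda)|\lesssim \lambda^{p_0/2}+\lambda^{p_1/2}$ uniformly in $(t,x)$, so
\[
|G(t,x,\lambda)|\lesssim \lambda^{p_0/2+1}+\lambda^{p_1/2+1}.
\]
Thus the integrand is dominated, uniformly in $\sigma$, by $|v(t',x')|^{p_0+2}+|v(t',x')|^{p_1+2}$.

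The only nontrivial step that remains is to verify that this dominating function is integrable on $\R\times\R^d$. The exponent $p_0+2=\tfrac{2(d+2)}{d}$ is Schr\"odinger admissible, so Strichartz immediately gives $\|v\|_{L^{p_0+2}_{t,x}}\lesssim \|\psi\|_{L^2}<\infty$. The exponent $p_1+2$ is in general \emph{not} at a Strichartz-admissible pair, and this is the main (and only) obstacle: it is where I would invoke the Schwartz hypothesis on $\psi$. Interpolating the dispersive estimate $\|v(t,\cdot)\|_{L^\infty_x}\lesssim_\psi |t|^{-d/2}$ against mass conservation produces $\|v(t,\cdot)\|_{L^{p_1+2}_x}^{p_1+2}\lesssim_\psi |t|^{-dp_1/2}$ for $|t|\geq 1$, which is integrable at infinity since $p_1>p_0=\tfrac{4}{d}>\tfrac{2}{d}$, while for $|t|\leq 1$ Sobolev embedding together with the smoothness of $\psi$ yields a uniform bound on $\|v(t,\cdot)\|_{L^{p_1+2}_x}$. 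Dominated convergence then finishes the argument, producing the desired $o_\psi(\sigma^{d+2})$ error once the $\sigma^{d+2}$ prefactor is reinstated.
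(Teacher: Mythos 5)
Your proof is correct and follows essentially the same route as the paper: rescale via $t=t_0+\sigma^2t'$, $x=x_0+\sigma x'$ to extract the $\sigma^{d+2}$ Jacobian, then apply dominated convergence using the continuity of $F$ for pointwise convergence and the $(t,x)$-uniform admissibility bound on $G$ for the dominating function. The only (immaterial) difference is in verifying integrability of that dominating function: the paper treats both exponents at once via Sobolev embedding into a Schr\"odinger-admissible Strichartz space, $\|v\|_{L_{t,x}^{q}}\lesssim \||\nabla|^{s}\psi\|_{L^2}$, whereas you handle the $p_0$ term by Strichartz and the $p_1$ term by the dispersive estimate plus interpolation --- both arguments are valid for Schwartz data.
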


\begin{proof} Introducing the function $H_\sigma$ via 
\begin{align*}
G(t_0+\sigma^2 t,x_0+\sigma x,\lambda ) = H_\sigma(t,x,\lambda )
\end{align*}
and making a change of variables in the integral shows
\begin{align*}
\iint G(t,x,|v_\sigma(t-t_0,x-x_0)|^2)\,dx\,dt 
& = \sigma^{d+2}\iint H_\sigma(t,x,|v(t,x)|^2)\,dx\,dt .
\end{align*}
The proof then reduces to showing that, as $\sigma\to0$,
\begin{equation}\label{H00}
\iint H_\sigma(t,x,|v(t,x)|^2)\,dx\,dt = \iint G(t_0,x_0,|v(t,x)|^2)\,dx\,dt + o_\psi(1).
\end{equation}

To this end, we first note that by the continuity of $F$, we have that
\[
\lim_{\sigma\to 0}H_\sigma(t,x,|v(t,x)|^2)=G(t_0,x_0,|v(t,x)|^2)\qtq{for all}(t,x).
\]
Next, we observe that
\[
|H_\sigma(t,x,|v(t,x)|^2)| \lesssim \sum_{p\in\{p_0,p_1\}} |v(t,x)|^{2p+2} 
\]
uniformly in $(t,x)$ and $\sigma>0$, and we claim that
\[
\psi\in \mathcal{S}(\R^d) \implies \sum_{p\in\{p_0,p_1\}} |v(t,x)|^{2p+2}\in L_{t,x}^1(\R\times\R^d). 
\]
To verify this claim, we observe that by Sobolev embedding and Strichartz estimates, 
\[
\|v\|_{L_{t,x}^{2p+2}} \lesssim \||\nabla|^{\frac{dp-2}{2p+2}}v\|_{L_t^{2p+2} L_x^{\frac{2d(p+1)}{d(p+1)-2}}}\lesssim \||\nabla|^{\frac{dp-2}{2p+2}} \psi\|_{L^2}<\infty 
\]
for $p\in\{p_0,p_1\}$.  

The assertion \eqref{H00} now follows from the dominated convergence theorem.\end{proof}

Combining Lemma~\ref{L:Born} and Lemma~\ref{L:localize}, we obtain the following: 

\begin{proposition}[Pointwise agreement of potentials]\label{P:agree} Let $F_1$ and $F_2$ be admissible nonlinearities, with corresponding potentials $G_1,G_2$.  Denote the corresponding scattering maps by $S_1,S_2$.

Suppose that $S_1=S_2$ on their common domain. Then for any $(t_0,x_0)$, $A>0$, and $\psi\in\mathcal{S}(\R^d)$,  
\[
\iint G_1(t_0,x_0,A|e^{it\Delta}\psi|^2)\,dx\,dt = \iint G_2(t_0,x_0,A|e^{it\Delta}\psi|^2)\,dx\,dt. 
\]
\end{proposition}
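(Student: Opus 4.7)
The plan is to combine the Born approximation (Lemma~\ref{L:Born}) with the space-time localization (Lemma~\ref{L:localize}) by evaluating both on a single one-parameter family of initial data whose free evolution is a wave packet concentrating at $(t_0,x_0)$ as $\sigma\to 0$. The natural choice is
\[
\varphi_\sigma(x) := \sqrt{A}\,\bigl(e^{-it_0\Delta}[\psi(\tfrac{\cdot}{\sigma})]\bigr)(x-x_0),
\]
so that, setting $v(t,x):=e^{it\Delta}\psi(x)$ and $v_\sigma(t,x):=v(t/\sigma^2,x/\sigma)$ as in Lemma~\ref{L:localize}, commutativity of $e^{it\Delta}$ with translations gives
\[
|e^{it\Delta}\varphi_\sigma(x)|^2 = A\,|v_\sigma(t-t_0,x-x_0)|^2.
\]
A direct scaling computation yields $\|\varphi_\sigma\|_{L^2}\lesssim_{A,\psi}\sigma^{d/2}$ and $\|\varphi_\sigma\|_{H^{s(p)}}\lesssim_{A,\psi}\sigma^{d/2-s(p)}$ for the relevant exponents; in particular $\varphi_\sigma$ lies in the common domain $B_1\cap B_2$ of $S_1$ and $S_2$ for all sufficiently small $\sigma$.

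Next, I would apply Lemma~\ref{L:Born} to $\varphi_\sigma$ with $F=F_1$ and with $F=F_2$, subtract the two identities, and use the hypothesis $S_1=S_2$ to cancel the left-hand sides, obtaining
\[
\iint [G_1-G_2]\bigl(t,x,A|v_\sigma(t-t_0,x-x_0)|^2\bigr)\,dx\,dt = \mathcal{O}\!\biggl(\sum_{p\in\{p_0,p_1\}}\|\varphi_\sigma\|_{H^{s(p)}}^{2p}\|\varphi_\sigma\|_{L^2}^2\biggr).
\]
Inserting the scaling bounds and using the algebraic identity $dp-2p\,s(p)=4$, which is automatic from $s(p)=\tfrac{d}{2}-\tfrac{2}{p}$, shows that the right-hand side is $\mathcal{O}(\sigma^{d+4})$.

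Then I would apply Lemma~\ref{L:localize} to $G_1$ and $G_2$ separately, using $\sqrt{A}\psi$ in place of $\psi$, and subtract, producing
\[
\iint [G_1-G_2]\bigl(t,x,A|v_\sigma(t-t_0,x-x_0)|^2\bigr)\,dx\,dt = \sigma^{d+2}\iint [G_1-G_2](t_0,x_0,A|e^{it\Delta}\psi|^2)\,dx\,dt + o(\sigma^{d+2}).
\]
Combining the last two displays, dividing by $\sigma^{d+2}$, and sending $\sigma\to 0$ kills every error term; since the target integral on the left is independent of $\sigma$, it must vanish, proving the proposition.

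The main obstacle is the scaling bookkeeping in the middle step: one must ensure that the Born remainder beats the localization scale by a strictly positive power of $\sigma$. The two-power margin $\sigma^{d+4}$ versus $\sigma^{d+2}$ is a direct consequence of the identity $dp-2p\,s(p)=4$, which is itself the quantitative expression of the intercritical hypothesis $s(p)\in[0,1]$ built into admissibility. Without this margin, the Born approximation would supply the leading-order integral only up to a comparable error, and one could not identify the potential pointwise.
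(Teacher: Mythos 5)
Your proposal is correct and is essentially the paper's own argument: the paper uses the same concentrating data (written there as $[e^{-i(t_0/\sigma^2)\Delta}\psi](\tfrac{x-x_0}{\sigma})$, which agrees with your $\varphi_\sigma$ up to the $\sqrt{A}$ normalization via the scaling symmetry of the free propagator), the same subtraction of the two Born approximations, and the same $\mathcal{O}(\sigma^{d+4})$ versus $\sigma^{d+2}$ comparison coming from $pd-2p\,s(p)=4$ before sending $\sigma\to 0$.
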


\begin{proof} Fix $(t_0,x_0)\in\R\times\R^d$, $A>0$, and $\psi\in\mathcal{S}(\R^d)$, and denote the parameters of $F_1,F_2$ by $(p_0,p_1)$, $(p_0,p_2)$, respectively.  

Given $0<\sigma<1$, we define
\[
\varphi_\sigma(x) =\bigl[e^{-i(t_0/\sigma^2)\Delta}\psi\bigr](\tfrac{x-x_0}{\sigma}).
\]

Noting that
\[
\| \varphi_\sigma \|_{H^s(\R^d)} \lesssim_\psi \sigma^{\frac{d}{2}-s}\qtq{for}s\geq 0
\]
and that $s(p_j)<\tfrac{d}{2}$ for $j\in\{0,1,2\}$, it follows that $\sqrt{A}\varphi_\sigma$ belongs to the common domain of $S_1$ and $S_2$ for $\sigma$ sufficiently small. 

Assuming that $S_1$ and $S_2$ agree on their common domain, the Born approximation (Lemma~\ref{L:Born}) implies
\begin{align*}
\iint [G_2(t,x&, A|e^{it\Delta}\varphi_\sigma|^2(x))-G_1(t,x,A|e^{it\Delta}\varphi_\sigma|^2(x))]\,dx\,dt \\
& = \mathcal{O}_A\biggl[\sum_{p\in\{p_0,p_1,p_2\}}\|\varphi_{\sigma}\|_{H^{s(p)}}^{2p}\|\varphi_\sigma\|_{L^2} ^2\biggr] =\mathcal{O}_A(\sigma^{d+4})
\end{align*}
for small $\sigma$.  Now observe that
\[
[e^{it\Delta}\varphi_\sigma](x) = [e^{i\sigma^{-2}(t-t_0)\Delta}\psi](\tfrac{x-x_0}{\sigma}) =: v_\sigma(t-t_0,x-x_0),
\]
 where we write
\[
v_\sigma(t,x)=\bigl(e^{it\Delta}[\psi(\tfrac{\cdot}{\sigma})]\bigr)(x) = v(\tfrac{t}{\sigma^2},\tfrac{x}{\sigma}),\qtq{with} v(t,x)= e^{it\Delta}\psi.
\]

Thus, applying space-time localization (Lemma~\ref{L:localize}), we obtain
\begin{align*}
\iint & [G_2(t_0,x_0,A|v(t,x)|^2)-G_1(t_0,x_0,A|v(t,x)|^2)]\,dx\,dt  =\mathcal{O}(\sigma^2)+o(1)
\end{align*}
as $\sigma\to 0$.  As the left-hand side does not depend on $\sigma$, we now obtain the desired equality by taking the limit as $\sigma\to 0$. \end{proof}

Next, we rewrite the result of Proposition~\ref{P:agree} so as to exhibit a hidden convolution structure.  We also take this opportunity to specialize to the case of Gaussian data, which is all that we shall need to consider in the next section.  The specific form of the identity appearing in \eqref{conv0-2} is related to the fact that neither factor in the convolution \eqref{conv0-1} is individually well-behaved, and anticipates the analysis of the following section. 

\begin{corollary}[Equality of convolutions]\label{C:conv0} Let $F_1,F_2$ be admissible nonlinearities, with corresponding potentials $G_1,G_2$ and scattering maps $S_1, S_2$. Suppose that $S_1=S_2$ on their common domain. 

Fix $(t_0,x_0)\in\R\times\R^d$ and let $\psi(x)=\exp\{-\tfrac{|x|^2}{4}\}$. Define
\[
g_j(|u|^2) = G_j(t_0,x_0,|u|^2)\qtq{and}h_j(k)= e^{-k}g_j'(e^{-k})\qtq{for}j\in\{1,2\}. 
\]

Then
\begin{equation}\label{conv0-1}
\int_{-a}^\infty [h_1(k)-h_2(k)] \mu(e^{-[k+a]})\,dk = 0 \qtq{for all}a\in\R,
\end{equation}
where $\mu$ is the distribution function
\begin{equation}\label{mu}
\mu(\lambda)  :=  \bigl|\bigl\{(t,x)\in\R\times\R^d:|e^{it\Delta}\psi(x)|^2>\lambda\bigr\}\bigr|.
\end{equation}

In particular, for any $a\in\R$ and any $c\in\R$,
\begin{equation}\label{conv0-2}
\int_0^\infty e^{c(k+\ell)}[h_1(k-a+\ell)-h_2(k-a+\ell)]e^{-ck}\mu(e^{-k})\,dk = 0 \qtq{for all} \ell\geq 0. 
\end{equation}

\end{corollary}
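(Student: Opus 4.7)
The plan is to derive \eqref{conv0-1} directly from Proposition~\ref{P:agree} by rewriting the space–time integral via the layer-cake representation of the distribution function $\mu$, performing the logarithmic substitution $\lambda = e^{-k}$, and then exploiting the pointwise bound $|e^{it\Delta}\psi|^2 \leq 1$ to truncate the domain of integration. The identity \eqref{conv0-2} is then essentially cosmetic: the factor $e^{c(k+\ell)}e^{-ck} = e^{c\ell}$ pulls out as a nonvanishing constant, and a translation of the integration variable reduces it to \eqref{conv0-1}.

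Concretely, I would apply Proposition~\ref{P:agree} at $(t_0,x_0)$ with amplitude $A>0$ and the Gaussian $\psi(x) = e^{-|x|^2/4}$. Since admissibility forces $\rho_j(t,x,0)\equiv 0$, we have $g_j(0) = 0$, so $g_j(s) = \int_0^s g_j'(\lambda)\,d\lambda$. The admissibility bound $|g_j'(\lambda)| \lesssim \lambda^{p_0/2} + \lambda^{p_1/2}$, combined with $L^q$-integrability of $|e^{it\Delta}\psi|^2$ (which holds for $q$ slightly above $1 + \tfrac{1}{d}$), justifies a Fubini exchange, turning the equation supplied by Proposition~\ref{P:agree} into
\[
\int_0^\infty \bigl[g_1'(\lambda) - g_2'(\lambda)\bigr]\,\mu(\lambda/A)\,d\lambda = 0.
\]
Substituting $\lambda = e^{-k}$ and $A = e^a$ then yields
\[
\int_{-\infty}^\infty [h_1(k) - h_2(k)]\,\mu(e^{-(k+a)})\,dk = 0.
\]
The explicit Gaussian computation
\[
|e^{it\Delta}\psi(x)|^2 = (1+t^2)^{-d/2}\exp\Bigl(-\tfrac{|x|^2}{2(1+t^2)}\Bigr) \leq 1
\]
shows $\mu(\lambda) = 0$ for $\lambda \geq 1$, so the integrand vanishes whenever $k \leq -a$ and the integral collapses to \eqref{conv0-1}.

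For \eqref{conv0-2}, I would pull out the constant factor $e^{c(k+\ell)}e^{-ck} = e^{c\ell}$ and perform the substitution $k' = k + \ell - a$. The lower limit transforms from $0$ to $\ell - a$, and the resulting integral is exactly \eqref{conv0-1} with $a$ replaced by $a - \ell \in \R$, which vanishes by what was just proved.

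There is no genuine obstacle at this stage beyond the (routine) Fubini verification, which is easy thanks to the explicit Gaussian form of $\psi$. The substantive work---verifying that the Laplace transform of $k \mapsto e^{-ck}\mu(e^{-k})$ is an outer function on a suitable half-plane, so that Corollary~\ref{C:BL} applies to conclude $h_1 \equiv h_2$---is reserved for Section~\ref{S:Deconvolution}. The precise form \eqref{conv0-2} is tailored to that application: it matches exactly the convolution hypothesis \eqref{BL question} underlying the Beurling–Lax framework of Section~\ref{S:BL}.
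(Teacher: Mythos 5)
Your proposal is correct and follows essentially the same route as the paper: apply Proposition~\ref{P:agree} with $A=e^a$, use $g_j(0)=0$ and the layer-cake/Fubini identity to pass to $\int g_j'(\lambda)\mu(\lambda e^{-a})\,d\lambda$, substitute $\lambda=e^{-k}$, and truncate via $\mu(\lambda)=0$ for $\lambda\geq1$; the passage to \eqref{conv0-2} by extracting the constant $e^{c\ell}$ and shifting $a\mapsto a-\ell$ is also exactly the paper's step. The integrability check you flag is the same one the paper dispatches with the support property of $\mu$ plus admissibility and Strichartz.
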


\begin{proof} Fix $a\in\R$. By the Fundamental Theorem of Calculus and the change of variables $\lambda=e^{-k}$, we may obtain 
\begin{align*}
\iint g_j(e^{a}|e^{it\Delta}\psi|^2)\,dx\,dt &  = \int_0^{e^{a}} g_j'(\lambda) \mu(\lambda e^{-a})\,d\lambda \\
& = \int_{-a}^\infty e^{-k}g_j'(e^{-k})\mu(e^{-[k+a]})\,dk \\
& = \int_{-a}^\infty h_j(k)\mu(e^{-[k+a]})\,dk
\end{align*}
for $j\in\{1,2\}$.  Here we have used the fact that $|e^{it\Delta}\psi|^2\leq 1$ for all $(t,x)\in\R\times\R^d$ (cf. \eqref{gaussian-solution} below) and so $\mu(\lambda)=0$ for $\lambda\geq 1$.  This support property of $\mu$ together with the Strichartz inequality and the admissibility of $F_j$ guarantee the absolute convergence of all these integrals. The identity \eqref{conv0-1} now follows from Proposition~\ref{P:agree}. 

To obtain \eqref{conv0-2} from \eqref{conv0-1}, we first change variables in the integral via $k\mapsto k-a$ and then make the replacement $a\mapsto a- \ell$.  Notice that these changes have brought \eqref{conv0-1} into a form more closely resembling \eqref{BL question}
\end{proof}

\section{Deconvolution}\label{S:Deconvolution}

In Corollary~\ref{C:conv0}, we reduced the proof of Theorem~\ref{T} to a deconvolution problem involving the distribution function of a Gaussian solution to the linear Schr\"odinger equation.  Specifically, it remains to show that \eqref{conv0-2} implies $h_1\equiv h_2$, since this in turn yields $F_1\equiv F_2$.  In this section, we resolve the deconvolution problem using Corollary~\ref{C:BL}.  The first step is to compute the Laplace transform of $\mu(e^{-k})$.  Note that this function grows as $k\to \infty$; this limits the region of $z$ for which the integral is convergent.  Nevertheless, we find that the Laplace transform admits the meromorphic continuation \eqref{M} to the whole complex plane.

Recall that the Gamma function is defined by
\[
\Gamma(z) = \int_0^\infty e^{-t} t^{z-1}\,dt \quad\text{when $\Re z>0$}. 
\]
It can then be extended meromorphically to $\C$ via the relation $\Gamma(z)=z^{-1}\Gamma(z+1)$.

\begin{proposition}\label{P:Laplace} Let 
\begin{equation}\label{mu defn}
\psi(x) := \exp\{-\tfrac{|x|^2}{4}\} \qtq{and} \mu(\lambda)  :=  \bigl|\bigl\{(t,x)\in\R\times\R^d:|e^{it\Delta}\psi(x)|^2>\lambda\bigr\}\bigr|.
\end{equation}
Then the integral 
\begin{equation}\label{M defn}
M(z) :=  \int_0^\infty e^{-kz} \mu(e^{-k})\,dk,
\end{equation}
which is absolutely convergent for $\Re z>1+\tfrac{1}{d}$, is given by
\begin{equation}\label{M}
M(z) =  2^{\frac{d}{2}}\pi^{\frac{d+1}{2}} z^{-\frac{d}{2}-1}\frac{\Gamma(\frac{d}{2}(z-1)-\frac12)}{\Gamma(\frac{d}{2}(z-1))} .
\end{equation}
Moreover, this function $M(z)$ obeys the following bounds: 
\begin{equation}\label{M-bounds}
|M(z)|\approx_{d} (1+|z|^2)^{-\frac{d+3}{4}} \qtq{uniformly for}\Re z\geq1+\tfrac{3}{2d}.
\end{equation}
\end{proposition}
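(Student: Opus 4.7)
The plan is to evaluate $M(z)$ directly, bypassing the distribution function altogether, by rewriting the Laplace transform as a space-time integral of $|e^{it\Delta}\psi|^{2z}$ via the layer-cake formula. The resulting integral splits into a Gaussian $x$-integral and a rational $t$-integral, both of which have closed forms. Stirling's formula then delivers the quantitative bounds.

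\textbf{Step 1: Explicit Gaussian evolution.} A direct Fourier-transform computation (or the standard convolution formula for $e^{it\Delta}$) gives
\begin{equation}\label{gaussian-solution-plan}
e^{it\Delta}\psi(x) = (1+it)^{-d/2}\exp\!\Bigl\{-\tfrac{|x|^2}{4(1+it)}\Bigr\},
\end{equation}
so that $|e^{it\Delta}\psi(x)|^2 = (1+t^2)^{-d/2}\exp\!\{-\tfrac{|x|^2}{2(1+t^2)}\} \leq 1$.

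\textbf{Step 2: Layer cake.} Substituting $\lambda=e^{-k}$ in \eqref{M defn} yields $M(z)=\int_0^1 \lambda^{z-1}\mu(\lambda)\,d\lambda$. Writing $\mu(\lambda)$ as a double integral of indicator functions and applying Fubini (justified first for real $z>1+\tfrac1d$ by non-negativity, then extended by analytic continuation), the bound $|e^{it\Delta}\psi|^2\leq 1$ gives
\[
M(z) = \frac{1}{z}\iint_{\R\times\R^d} |e^{it\Delta}\psi(x)|^{2z}\,dx\,dt.
\]

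\textbf{Step 3: Evaluation.} Inserting \eqref{gaussian-solution-plan}, the $x$-integral is a Gaussian (convergent for $\Re z>0$):
\[
\int_{\R^d}\exp\Bigl\{-\tfrac{z|x|^2}{2(1+t^2)}\Bigr\}\,dx = \bigl(\tfrac{2\pi(1+t^2)}{z}\bigr)^{d/2}.
\]
The remaining $t$-integral is $\int_\R (1+t^2)^{-d(z-1)/2}\,dt$, which equals $\sqrt{\pi}\,\Gamma(\tfrac{d(z-1)}{2}-\tfrac{1}{2})/\Gamma(\tfrac{d(z-1)}{2})$ and converges precisely when $\tfrac{d(\Re z-1)}{2}>\tfrac{1}{2}$, i.e., $\Re z > 1+\tfrac{1}{d}$. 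Combining the two integrals and the factor $1/z$ produces \eqref{M}. The same calculation establishes the advertised domain of absolute convergence.

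\textbf{Step 4: Uniform two-sided bounds.} Set $w := \tfrac{d(z-1)}{2}$, so that $\Re z \geq 1+\tfrac{3}{2d}$ corresponds to $\Re w \geq \tfrac{3}{4}$. In this closed right half-plane, $w$ stays uniformly bounded away from the poles of $\Gamma(w)$ and $\Gamma(w-\tfrac{1}{2})$ and from the negative real axis, so Stirling's asymptotic applies uniformly and yields
\[
\bigl|\Gamma(w-\tfrac{1}{2})/\Gamma(w)\bigr| \approx_d (1+|w|)^{-1/2} \approx_d (1+|z|)^{-1/2}.
\]
Since $|z|\gtrsim 1$ on the region $\Re z\geq 1+\tfrac{3}{2d}$, the prefactor $|z|^{-d/2-1}$ is comparable to $(1+|z|)^{-d/2-1}$. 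Multiplying gives
\[
|M(z)| \approx_d (1+|z|)^{-(d+3)/2} \approx_d (1+|z|^2)^{-(d+3)/4},
\]
which is \eqref{M-bounds}.

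The only delicate point is Step 4: verifying that the two-sided Stirling comparison $|\Gamma(w-\tfrac{1}{2})/\Gamma(w)| \approx (1+|w|)^{-1/2}$ holds with constants \emph{uniform} across the entire half-plane $\Re w\geq \tfrac{3}{4}$, including the regime $|\Im w|\to\infty$. This follows from the standard form of Stirling in sectors $|\arg w|\leq \pi-\delta$, noting that the exponential factors $e^{-\pi|\Im w|/2}$ arising from Stirling cancel in the ratio so that only the polynomial factor $|w|^{-1/2}$ survives. Steps 1--3 are routine Gaussian calculus, so the bulk of the work is the computation itself rather than any conceptual obstacle.
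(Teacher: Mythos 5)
Your proposal is correct and follows essentially the same route as the paper: the same layer-cake reduction to $z^{-1}\iint|e^{it\Delta}\psi|^{2z}\,dx\,dt$, the same Gaussian and Beta integral evaluations yielding \eqref{M}. The only divergence is in the final step, where the paper obtains the two-sided bound \eqref{M-bounds} from the Gautschi-type inequality $|\Gamma(z+\alpha)/\Gamma(z)|\leq|z|^{\alpha}$ together with the functional equation, whereas you invoke uniform Stirling asymptotics in the half-plane $\Re w\geq\tfrac34$; both are standard and valid.
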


\begin{proof} We begin by using the change of variables $\lambda=e^{-k}$ and the fundamental theorem of calculus to write
\begin{align*}
\int_0^\infty e^{-kz}\mu(e^{-k})\,dk & = \int_0^1 \lambda^{z-1}\mu(\lambda)\,d\lambda  = z^{-1}\iint |e^{it\Delta}\psi(x)|^{2z}\,dx\,dt. 
\end{align*}
We now use the fact that
\begin{equation}\label{gaussian-solution}
\psi(x)=\exp\{-\tfrac{|x|^2}{4}\} \implies e^{it\Delta}\psi(x) = \bigl[\tfrac{1}{1+it}\bigr]^{\frac{d}{2}} \exp\{-\tfrac{|x|^2}{4(1+it)}\}
\end{equation}
(see e.g. \cite[Equation~(2.4)]{Visan}), which yields
\[
|e^{it\Delta}\psi(x)|^2 = (1+t^2)^{-\frac{d}{2}}\exp\{-\tfrac{|x|^2}{2(1+t^2)}\}.
\]

Recalling the Gaussian integral 
\[
\int_{\R^d} \exp\{- w |x|^2\}\,dx = (\tfrac{\pi}{w})^{\frac d2},  \qtq{valid for} \Re w >0,
\]
we obtain
\[
M(z) = z^{-1}\iint |e^{it\Delta}\psi(x)|^{2z}\,dx\,dt = 2^{\frac{d}{2}}\pi^{\frac{d}{2}} z^{-\frac{d}{2}-1}\int_\R (1+t^2)^{-\frac{d}{2}[z-1]}\,dt.
\]
This is clearly absolutely convergent if and only $\Re z>1+\tfrac{1}{d}$.

By making the change of variables $s=(1+t^2)^{-1}$, we obtain the following special case of Euler's Beta integral:
\begin{align*}
\int_\R (1+t^2)^{-c}\,dt &= \int_0^1 s^{c-\frac32}(1-s)^{-\frac12}\,ds =\tfrac{\Gamma(\frac12)\Gamma(c-\frac12)}{\Gamma(c)} = \pi^{\frac12} \tfrac{\Gamma(c-\frac12)}{\Gamma(c)}
\end{align*}
with $\Re c>\tfrac12$. This proves \eqref{M}.

We now turn to the bounds in \eqref{M-bounds}.  The key property of the Gamma function that we will use is the following inequality (see \cite[Theorem A, page 68]{Rademacher}):
\[
\biggl| \frac{\Gamma(z+\alpha)}{\Gamma(z)}\biggr| \leq |z|^\alpha \qtq{for}\alpha\in[0,1]\qtq{and}\Re z\geq\tfrac12(1-\alpha).
\]
This directly implies
\begin{equation}\label{MLB}
|M(z)| \geq 2^{\frac{d}{2}}\pi^{\frac{d+1}{2}} |z|^{-\frac{d}{2}-1}|\tfrac{d}{2}(z-1)-\tfrac12|^{-\frac12}
	\qtq{for}\Re z\geq1+\tfrac{3}{2d}. 
\end{equation}
Using the identity $\Gamma(w)=w^{-1}\Gamma(w+1)$, we also obtain
\begin{equation}\label{MUB}
|M(z)| \leq 2^{\frac{d}{2}}\pi^{\frac{d+1}{2}} |z|^{-\frac{d}{2}-1}|\tfrac{d}{2}(z-1)-\tfrac12|^{-1}|\tfrac{d}{2}(z-1)|^{\frac12}
	\qtq{for}\Re z>1+\tfrac{1}{d}. 
\end{equation}
The unified \eqref{M-bounds} follows from \eqref{MLB}, \eqref{MUB}, and the restriction on~$z$.
\end{proof}


\begin{corollary}\label{C:outer} Suppose $\mu(\lambda)$ is defined by \eqref{mu defn}, $c\geq 1+\tfrac{3}{2d}$, 
\begin{equation}\label{V defn}
v(k):= e^{-ck} \mu(e^{-k}), \qtq{ and } V(z) :=  \int_0^\infty e^{-kz} v(k)\,dk.
\end{equation}
Then $V(z)\in\H^2(\mathbb H)$ and is an outer function.
\end{corollary}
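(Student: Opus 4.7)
The plan is to translate everything through the identity $V(z) = M(z+c)$ and then invoke Proposition~\ref{P:outer}. By direct substitution,
\[
V(z) = \int_0^\infty e^{-k(z+c)}\mu(e^{-k})\,dk = M(z+c),
\]
and the integral converges absolutely whenever $\Re(z+c) > 1 + \tfrac{1}{d}$; since $c \geq 1 + \tfrac{3}{2d}$, this holds throughout $\overline{\mathbb H}$, and by the meromorphic continuation \eqref{M} the identity persists on a slightly larger half-plane.

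First I would verify $V \in \H^2(\mathbb H)$. Since $M$ is meromorphic on $\C$ with poles lying in $\{\Re w \leq 1 + \tfrac{1}{d}\}$ (coming from the $\Gamma$-factor and the $z^{-d/2-1}$ prefactor in \eqref{M}), and all of these satisfy $\Re w < c$, the translate $V(z) = M(z+c)$ is analytic on $\mathbb H$. The upper half of \eqref{M-bounds} then yields
\[
\int_\R |V(x+iy)|^2\,dy \lesssim \int_\R \bigl(1 + (c+x)^2 + y^2\bigr)^{-(d+3)/2}\,dy \lesssim 1
\]
uniformly in $x \geq 0$, which places $V$ in $\H^2(\mathbb H)$.

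To establish outerness I would apply Proposition~\ref{P:outer}, taking $W(z) := (1+z)^{-n}/V(z)$ for some integer $n$ to be chosen. The lower half of \eqref{M-bounds} shows that $V(z)$ is zero-free on $\overline{\mathbb H}$, so $W$ is analytic on $\mathbb H$. For $x \geq 0$, the elementary estimates $(1+x)^2 + y^2 \geq 1 + x^2 + y^2$ and $(c+x)^2 \lesssim_c 1 + x^2$ combine with the lower bound in \eqref{M-bounds} to give
\[
|W(x+iy)|^2 \lesssim \frac{\bigl(1 + (c+x)^2 + y^2\bigr)^{(d+3)/2}}{\bigl((1+x)^2 + y^2\bigr)^{n}} \lesssim_c (1 + x^2 + y^2)^{(d+3)/2 - n}.
\]
Rescaling $y = \sqrt{1+x^2}\,s$ identifies the $y$-integral with a constant multiple of $(1+x^2)^{(d+4)/2 - n}$, provided $n > (d+4)/2$; taking, say, $n = d+3$ then produces a bound uniform in $x \geq 0$. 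Hence $W \in \H^2(\mathbb H)$, and Proposition~\ref{P:outer} yields that $V$ is outer.

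The heart of the matter is the lower bound in \eqref{M-bounds}: outerness requires quantitative non-vanishing of $V$ on all of $\overline{\mathbb H}$ strong enough to be beaten by a polynomial factor, and this is precisely what the condition $c \geq 1 + \tfrac{3}{2d}$ secures by placing $c + \overline{\mathbb H}$ inside the region $\{\Re w \geq 1 + \tfrac{3}{2d}\}$ where Proposition~\ref{P:Laplace} supplies both sides of the two-sided estimate on $|M|$.
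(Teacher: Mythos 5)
Your proposal is correct and follows essentially the same route as the paper: identify $V(z)=M(z+c)$, use the upper bound in \eqref{M-bounds} to verify the Hardy condition, and use the lower bound together with Proposition~\ref{P:outer} (choosing $n>\tfrac{d+4}{2}$) to conclude outerness. The only difference is that you spell out the elementary reductions $(1+x)^2+y^2\geq 1+x^2+y^2$ and the rescaling of the $y$-integral, which the paper leaves implicit.
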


\begin{proof} By the restriction on $c$, the integral defining $V(z)$ is absolutely convergent in the half-plane $\Re z \geq 0$; thus, $V(z)$ is analytic there.  To show that it belongs to $\H^2(\mathbb H)$, we must verify \eqref{Hardy cond}.  Using \eqref{M-bounds} makes this easy:
\[
\sup_{x>0} \int_\R |V(x+iy)|^2\,dy = \sup_{x>0} \int_\R |M(c+x+iy)|^2\,dy
	\lesssim \int_{-\infty}^\infty (1+y^2)^{-\frac{(d+3)}2}\,dy <\infty.
\]

To prove that $V(z)$ is outer, we use Proposition~\ref{P:outer} in concert with the lower bound from \eqref{M-bounds}: Choosing $n\in \mathbb N$ with $n> \frac{d+4}2$, we have
\begin{equation*}
\sup_{x>0} \int_\R \frac{dy}{[(1+x)^2+y^2]^n |V(x+iy)|^2} \lesssim \int_{-\infty}^\infty (1+y^2)^{\frac{(d+3)}2 - n }\,dy <\infty. \qedhere
\end{equation*}
\end{proof}

Finally, we are in a position to complete the proof of Theorem~\ref{T}.

\begin{proof}[Proof of Theorem~\ref{T}] We suppose that $F_1,F_2$ are admissible nonlinearities with parameters $(p_0,p_1)$, $(p_0,p_2)$ and scattering maps $S_1,S_2$.  We suppose further that that $S_1$ and $S_2$ agree on their common domain.

Fix $(t_0,x_0)\in\R\times\R^d$ and define $h_1,h_2$ as in Corollary~\ref{C:conv0}.  By \eqref{conv0-2} from that corollary, we know that for any choice of $a,c\in\R$, the two functions
\begin{equation}\label{v and phi}
v(k):=e^{-ck}\mu(e^{-k})  \qtq{and} \varphi_a(k) := e^{ck}[h_1(k-a)-h_2(k-a)]
\end{equation}
satisfy the convolution equation \eqref{BL question}.  Note that $\varphi_a(k)$ is real-valued.

In order to apply Corollary~\ref{C:BL}, we must ensure that $v,\varphi_a\in L^2([0,\infty))$ and that the Laplace transform $V(z)$ of $v(k)$ is an outer function.  

By admissibility of the nonlinearities and recalling the definition of $h_j$ from Corollary~\ref{C:conv0}, we see that $\varphi_a(k) \in L^2([0,\infty))$ for any $a\in\R$ provided $c<1+\tfrac{2}{d}$.  In particular, we may choose $c=1+\tfrac{3}{2d}$.  For this choice, Corollary~\ref{C:outer} shows that  $V(z)\in \H^2(\mathbb H)$ and that it is outer.  Using the Plancherel identity, this proves $v(k)\in L^2([0,\infty))$.

Applying Corollary~\ref{C:BL}, we find that $\varphi_a(k)\equiv 0$ for all almost every $k\in[0,\infty)$ and all $a\geq 0$.  This implies that the continuous functions $h_1(k)$ and $h_2(k)$ agree for all $k\in\R$.  Recalling the definition of $h_j$ from Corollary~\ref{C:conv0}, we obtain that $F_1(t_0,x_0,u)=F_2(t_0,x_0,u)$ for all $u\in\C$.  Finally, as $(t_0,x_0)$ was arbitrary, we conclude $F_1\equiv F_2$.\end{proof}



\begin{thebibliography}{100}

\bibitem{SBUW} A. S\'a Barreto, G. Uhlmann, and Y. Wang, \emph{Inverse scattering for critical semilinear wave equations}.  Pure Appl. Anal. \textbf{4} (2022), no. 2, 191--223.

\bibitem{SBS2} A. S\'a Barreto and P. Stefanov, \emph{Recovery of a cubic non-linearity in the wave equation in the weakly non-linear regime.} Comm. Math. Phys. \textbf{392} (2022), no. 1, 25--53.

\bibitem{Beurling} A. Beurling, \emph{On two problems concerning linear transformations in Hilbert space.} Acta. Math. \textbf{81} (1948), 239--255. 

\bibitem{CarlesGallagher} R. Carles and I. Gallagher, \emph{Analyticity of the scattering operator for semilinear dispersive equations.} Comm. Math. Phys. \textbf{286} (2009), no. 3, 1181--1209.

\bibitem{ChenMurphy1} G. Chen and J. Murphy, \emph{Stability estimates for the recovery of the nonlinearity from scattering data.} Preprint {\tt arXiv:2305.06170}. To appear in Pure Appl. Anal.

\bibitem{ChenMurphy2} G. Chen and J. Murphy, \emph{Recovery of the nonlinearity from the modified scattering map.} Preprint {\tt arXiv:2304.01455}. To appear in Int. Math. Res. Not. 

\bibitem{CW} M. Christ and M. Weinstein, \emph{Dispersion of small amplitude solutions of the generalized Korteweg-de
Vries equation.} J. Funct. Anal. \textbf{100} (1991), 87--109.

\bibitem{EW} V. Enss and R. Weder, \emph{The geometrical approach to multidimensional inverse scattering.} J. Math. Phys. \textbf{36} (1995), no. 8, 3902--3921.

\bibitem{Garnett} J. Garnett, \emph{Bounded analytic functions.} Graduate Texts in Mathematics, \textbf{236}. Springer, New York, 2007. 

\bibitem{GinibreVelo}  J. Ginibre and G. Velo, \emph{Smoothing properties and retarded estimates for some dispersive evolution equations.} Comm. Math. Phys. \textbf{144} (1992), 163--188. 

\bibitem{HMG} C. Hogan, J. Murphy, and D. Grow, \emph{Recovery of a cubic nonlinearity for the nonlinear Schr\"odinger equation.} J. Math. Anal. Appl. \textbf{522} (2023), no. 1, Article 127016. 

\bibitem{Hoffman} K. Hoffman, \emph{Banach spaces of analytic functions.} Reprint of the 1962 original. Dover Publications, Inc., New York, 1988. 

\bibitem{HKV} N. Hu, R. Killip, and M. Visan, \emph{Deconvolutional determination of the nonlinearity in a semilinear wave equation.} Preprint {\tt arXiv:2307.00829}. 

\bibitem{KeelTao} M. Keel and T. Tao, \emph{Endpoint Strichartz estimates.}  Amer. J. Math. \textbf{120} (1998), no. 5, 955--980.

\bibitem{KMV} R. Killip, J. Murphy, and M. Visan, \emph{The scattering map determines the nonlinearity.} Proc. Amer. Math. Soc. \textbf{151} (2023), no. 6, 2543--2557. 

\bibitem{Lax} P. D. Lax, \emph{Translation invariant spaces.} Acta. Math. \textbf{101} (1959), 163--178. 

\bibitem{LeeYu} Z. Lee and X. Yu, \emph{A note on recovering the nonlinearity for generalized higher-order Schr\"odinger equations.} To appear in Inverse Problems and Imaging. doi: 10.3934/ipi.2023039.

\bibitem{MorStr} C. S. Morawetz and W. A. Strauss, \emph{On a nonlinear scattering operator.} Comm. Pure Appl. Math. \textbf{26} (1973), 47--54.

\bibitem{Murphy} J. Murphy, \emph{Recovery of a spatially-dependent coefficient from the NLS scattering map.} Comm. Partial Differential Equations \textbf{48} (2023), no. 7-8, 991--1007. 

\bibitem{PauStr} B. Pausader and W. A. Strauss, \emph{Analyticity of the nonlinear scattering operator.} Discrete Contin. Dyn. Syst. \textbf{25} (2009), no. 2, 617--626.

\bibitem{Rademacher} H. Rademacher, \emph{Topics in Analytic Number Theory.} Edited by E. Grosswald, J. Lehner and M. Newman. Die Grundlehren der mathematischen Wissenschaften, Band \textbf{169}. \emph{Springer-Verlag, New York-Heidelberg,} 1973. 

\bibitem{Sasaki2} H. Sasaki, \emph{The inverse scattering problem for Schr\"odinger and Klein-Gordon equations with a nonlocal nonlinearity.} Nonlinear Analysis, Theory, Methods \& Applications \textbf{66} (2007), 1770--1781. 

\bibitem{Sasaki} H. Sasaki, \emph{Inverse scattering for the nonlinear Schr\"odinger equation with the Yukawa potential.} Comm. Partial Differential Equations \textbf{33} (2008), no. 7-9, 1175--1197. 

\bibitem{Sasaki3} H. Sasaki, \emph{Inverse scatering problems for the Hartree equation whose interaction potential decays rapidly.} J. Differential Equations \textbf{252} (2012), no. 2, 2004--2023. 

\bibitem{SasakiWatanabe} H. Sasaki and M. Watanabe, \emph{Uniqueness on identification of cubic convolution nonlinearity.} J. Math. Anal. Appl. \textbf{309} (2005), no. 1, 294--306. 

\bibitem{Strauss} W. A. Strauss, \emph{Nonlinear scattering theory.} In ``Scattering Theory in Mathematical Physics'', edited by J. A. Lavita and J. P. Marchand. D. Reidel, Dordrecht, Holland/Boston, 1974, pp. 53--178. 

\bibitem{Strichartz} R. Strichartz, \emph{Restrictions of Fourier transforms to quadratic surfaces and decay of solutions of wave equations.} Duke Math. J. \textbf{44} (1977), no. 3, 705--714.


\bibitem{Taylor} M. Taylor, \emph{Tools for PDE. Pseudodifferential operators, paradifferential operators, and layer potentials.} Mathematical Surveys and Monographs, 81. American Mathematical Society, Providence, RI, 2000.

\bibitem{Visan} M. Visan, \emph{Dispersive Equations}, in ``Dispersive Equations and Nonlinear Waves, Oberwolfach Seminars'' \textbf{45}, Birkhauser/Springer Basel AG, Basel, 2014.

\bibitem{Watanabe0} M. Watanabe, \emph{Inverse scattering for the nonlinear Schr\"odinger equation with cubic convolution nonlinearity.} Tokyo J. Math. \textbf{24} (2001), no. 1, 59--67.

\bibitem{Watanabe} M. Watanabe, \emph{Time-dependent method for non-linear Schr\"odinger equations in inverse scattering problems.} J. Math. Anal. Appl. \textbf{459} (2018), no. 2, 932--944.

\bibitem{Weder0} R. Weder, \emph{Inverse scattering for the nonlinear Schr\"odinger equation.} Comm. Partial Differential Equations \textbf{22} (1997), no. 11-12, 2089--2103. 
 
\bibitem{Weder1} R. Weder, \emph{Inverse scattering for the non-linear Schr\"odinger equation: reconstruction of the potential and the non-linearity.} Math. Methods Appl. Sci. \textbf{24} (2001), no. 4, 245--254. 

\bibitem{Weder6} R. Weder, \emph{$L^p$-$L^{p'}$ estimates for the Schr\"odinger equation on the line and inverse scattering for the nonlinear Schr\"odinger equation with a potential.} J. Funct. Anal. \textbf{170} (2000), no. 1, 37--68.

\bibitem{Weder3} R. Weder, \emph{Inverse scattering for the nonlinear Schr\"odinger equation. II. Reconstruction of the potential and the nonlinearity in the multidimensional case.} Proc. Amer. Math. Soc. \textbf{129} (2001), no. 12, 3637--3645. 

\bibitem{Weder4} R. Weder, \emph{Inverse scattering for the non-linear Schr\"odinger equation: reconstruction of the potential and the non-linearity.} Math. Methods Appl. Sci. \textbf{24} (2001), no. 4, 245--254.

\bibitem{Weder5} R. Weder, \emph{Multidimensional inverse scattering for the nonlinear Klein-Gordon equation with a potential.} J. Differential Equations \textbf{184} (2002), no. 1, 62--77. 



\end{thebibliography}
\end{document}